\documentclass[11pt]{amsart}

\usepackage{enumerate}
\usepackage{esint} 

\usepackage[USenglish]{babel}
\usepackage[T1]{fontenc} 
\usepackage[utf8,latin1]{inputenc}

\usepackage{amsmath,amsthm,amssymb,amsfonts,mathrsfs}
\usepackage{dsfont}
\usepackage{mathtools}

\usepackage{soul}



\usepackage{nicematrix}
\usepackage{lmodern}
\usepackage[all]{xy}

\usepackage[bookmarks=true]{hyperref}
\usepackage{xcolor}
\hypersetup{
    colorlinks,
    linkcolor={red!50!black},
    citecolor={blue!50!black},
    urlcolor={blue!80!black},
}

\usepackage[bottom=3cm, top=3cm, left=3.5cm, right=3.5cm]{geometry}

\usepackage{caption}

\mathtoolsset{showonlyrefs}  

\newcommand{\sr}{sub-Riemannian }

\newcommand{\vol}{{\rm vol}_\g}

\newcommand{\eps}{\varepsilon}
\newcommand{\di}{\mathsf d}
\newcommand{\g}{\mathsf g}
\newcommand{\X}{\text{X}}
\newcommand{\de}{\text{d}}

\DeclareMathOperator{\Geo}{Geo}
\newcommand{\p}{\mathtt p}

\newcommand{\diam}{\text{diam}}
\newcommand{\cut}{\text{Cut}}
\newcommand{\bW}{\mathbb W}

\newcommand{\jacobi}{{\sf J}}
\DeclareMathOperator{\trace}{tr}



\newcommand{\rcd}{{\sf RCD}}
\newcommand{\cd}{{\sf CD}}
\newcommand{\mcp}{{\sf MCP}}

\newcommand{\bm}{{\sf BM}}


\newcommand{\cD}{\mathcal{D}}
\newcommand{\cE}{\mathcal{E}}

\newcommand{\cM}{\mathcal{M}}

\newcommand{\cJ}{\mathcal{J}}


\newcommand{\wei}{\mathfrak m}



\newcommand{\Haus}{\mathscr{H}}

\newcommand{\Prob}{\mathscr{P}}

\newcommand{\Ric}{\text{Ric}}
\newcommand{\Ricn}{\Ric^{N,\wei}}

\newcommand{\spt}{\text{spt}}

\newcommand{\Leb}{\mathscr{L}}

\newcommand{\R}{\mathbb{R}}

\newcommand{\N}{\mathbb{N}}

\newcommand{\tand}{\quad\text{ and }\quad}

\newcommand{\suchthat}{\ensuremath{\ : \ }} 

\newcommand{\rie}{\text{Riem}}
\newcommand{\tr}{\text{tr}}
\newcommand{\id}{\text{Id}}
\newcommand{\tB}{\text{B}}

\newcommand{\lin}{\text{lin}}

\theoremstyle{plain}
\newtheorem{thm}{Theorem}[section]

\newtheorem{lem}[thm]{Lemma}
\newtheorem*{lem*}{Lemma}
\newtheorem{prop}[thm]{Proposition}

\theoremstyle{definition}
\newtheorem{defn}[thm]{Definition}

\theoremstyle{remark}
\newtheorem{rmk}[thm]{Remark}
\newtheorem*{notation}{Notation}


\setcounter{tocdepth}{1} 

\author[M. Magnabosco]{Mattia Magnabosco}
\author[L. Portinale]{Lorenzo Portinale}
\author[T. Rossi]{Tommaso Rossi}
\address{Institut f\"ur Angewandte Mathematik, Universit\"at Bonn, Bonn, Germany}
\email{\href{mailto:magnabosco@iam.uni-bonn.de}{magnabosco@iam.uni-bonn.de}}
\email{\href{mailto:rossi@iam.uni-bonn.de} {rossi@iam.uni-bonn.de}}
\email{\href{mailto:portinale@iam.uni-bonn.de}{portinale@iam.uni-bonn.de}}

\title[\texorpdfstring{$\bm(K,N)$}{BM(K,N)} implies \texorpdfstring{$\cd(K,N)$}{CD(K,N)} in weighted Riemannian manifolds]{The Brunn--Minkowski inequality implies the CD condition in weighted Riemannian manifolds}

\date{\today}

\begin{document}

\begin{abstract}
The curvature dimension condition $\cd(K,N)$, pioneered by Sturm and Lott--Villani in \cite{Sturm:2006-I,Sturm:2006-II,Lott-Villani:2009}, is a synthetic notion of having curvature bounded below and dimension bounded above, in the non-smooth setting. This 
condition implies a suitable generalization of the Brunn--Minkowski inequality, denoted $\bm(K,N)$. In this paper, we address the converse implication in the setting of weighted Riemannian manifolds, proving that $\bm(K,N)$ is in fact equivalent to $\cd(K,N)$. Our result allows to characterize the curvature dimension condition without using neither the optimal transport nor the differential structure of the manifold.
\end{abstract}

\maketitle


\section{Introduction}
    \label{sec:Introduction}

In their seminal papers \cite{Sturm:2006-I,Sturm:2006-II,Lott-Villani:2009}, Sturm and Lott--Villani introduced a synthetic notion of
curvature dimension bounds, in the non-smooth setting of metric measure spaces, usually denoted by $\cd(K,N)$, with $K\in\R$, $N\in [1,\infty]$. They observed that, in a (weighted) Riemannian manifold, the \emph{differential} notion of having Ricci curvature bounded below, and dimension bounded above, can be equivalently characterized in terms of a convexity property of the R\'enyi entropy functional, along Wasserstein geodesics. In particular, the latter property relies on the theory of optimal transport and does not require the smooth underlying structure. Therefore, it can be taken as a \emph{synthetic} definition, for a metric measure space, to have a curvature dimension bound. 

Among its many merits, the $\cd(K,N)$ condition is sufficient to deduce geometric and functional inequalities that hold in the smooth setting. An example is the so-called Brunn--Minkowski inequality, whose classical version in $\R^n$ (see e.g. \cite{Gardner:2002}) states that
\begin{equation}
    \mathscr{L}^n\left((1-t) A+ t B\right)^\frac{1}{n} \geq (1-t) \mathscr{L}^n(A)^\frac{1}{n} +  t \mathscr{L}^n(B)^\frac{1}{n},\quad\forall t\in [0,1]\,,
\end{equation}
for every two nonempty compact sets $A,B\subset\R^n$. 
In \cite{Sturm:2006-II}, Sturm proved that a $\cd(K,N)$ space supports a generalized version of the Brunn--Minkowski inequality, denoted $\bm(K,N)$, replacing the Minkowski sum of $A$ and $B$ with the set of midpoints and employing the so-called \emph{distortion coefficients}, see Definition \ref{def:bruno} for further details. 

In this paper, we address the converse implication: indeed there is a general belief in the optimal transport community that the Brunn--Minkowski inequality $\bm(K,N)$ is sufficient to deduce the $\cd(K,N)$ condition. This work provides a first positive partial answer to this problem in the setting of weighted Riemannian manifolds. 

\begin{thm}[$\bm(K,N)\Rightarrow\cd(K,N)$]
\label{thm:intro}
    Let $(M,\g)$ be a complete Riemannian manifold of dimension $n$, endowed with the reference measure $\wei= e^{-V} \vol$, where $V \in C^2(M)$. Suppose that the metric measure space $(M,\di_\g, \wei)$ satisfies $\bm(K,N)$ for some $K\in\R$ and $N>1$. Then, it is a $\cd(K,N)$ space (and in particular the two conditions are equivalent).
\end{thm}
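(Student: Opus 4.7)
The plan is to prove the converse implication by reducing to a pointwise curvature bound. In the smooth setting of weighted Riemannian manifolds $(M,\di_\g,\wei = e^{-V}\vol)$, the $\cd(K,N)$ condition is (classically, combining results of Sturm, Lott--Villani, von Renesse--Sturm and Ohta) equivalent to the $N$-Bakry--\'Emery tensor inequality
\[
\Ric_N^V := \Ric + \nabla^2 V - \frac{1}{N-n}\,\di V\otimes \di V \;\geq\; K\,\g
\]
(with the standard conventions in the boundary cases $N=n$ and $N<n$). It therefore suffices to prove the contrapositive: if this pointwise bound fails at some $(p,v_0)\in TM$, then $\bm(K,N)$ must also fail. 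In particular, the argument uses only distances and volumes and never appeals to optimal transport.

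So assume $\Ric_N^V(v_0,v_0) \leq K - \delta$ for some unit $v_0\in T_pM$ and some $\delta>0$. The strategy is to feed into $\bm(K,N)$ a carefully chosen infinitesimal family of pairs of sets centered near $p$: fix a small, convex, bounded neighborhood $E \subset T_pM$ of the origin and, for $r,\eps>0$ small, set
\[
A_0^{r,\eps} := \exp_p(-rv_0 + \eps E), \qquad A_1^{r,\eps} := \exp_p(rv_0 + \eps E).
\]
For $r,\eps$ small enough, every pair $(x,y) \in A_0^{r,\eps}\times A_1^{r,\eps}$ is joined by a unique minimizing geodesic, the associated $t$-midpoint $m_t(x,y)$ depends smoothly on $(x,y)$, and the image $M_t^{r,\eps}$ coincides, up to $O(\eps^2)$, with $\exp_p\bigl((2t-1)rv_0 + \eps E\bigr)$ (here convexity of $E$ is essential). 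Consequently all three weighted volumes can be written as $\eps^n$ times a smooth function of $r$ plus lower-order corrections, with the leading factor explicitly encoded in the Jacobi equation along $s\mapsto \exp_p(sv_0)$ and the weight $e^{-V}$.

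Inserting these expansions into $\bm(K,N)$, dividing by the common leading $\eps^n$ factor and letting $\eps\to 0$, yields a scalar inequality in $r$ alone, trivially satisfied at orders $r^0$ and $r^1$. At order $r^2$: on the left-hand side, the Riccati equation for the Jacobian of $\exp_p$ together with the $-\tfrac{1}{N-n}\,\di V \otimes \di V$ correction that arises when taking the $N$-th root of $e^{-V}\det D\exp_p$ contributes precisely $\Ric_N^V(v_0,v_0)$; on the right-hand side, the Taylor expansion of the distortion coefficients at $r=0$ contributes a multiple of $K$. Matching coefficients forces $\Ric_N^V(v_0,v_0) \geq K$, contradicting the assumption.

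The main obstacle I anticipate is the rigorous justification of these asymptotic expansions, in particular the identification of $M_t^{r,\eps}$ with $\exp_p((2t-1)rv_0 + \eps E)$ up to errors that are negligible after taking the $1/N$-power and rescaling, and the explicit computation of the $r^2$-term of the $1/N$-power of the weighted Jacobian via the Jacobi/Riccati ODE. The bookkeeping is somewhat delicate because the correct coefficient $\Ric_N^V(v_0,v_0)$ only emerges after taking the $1/N$-power of the volume, which is exactly where the weight correction $-\tfrac{1}{N-n}\,\di V \otimes \di V$ enters. Once these infinitesimal computations are set up, the final contradiction is a matter of elementary ODE analysis.
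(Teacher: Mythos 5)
Your high-level strategy matches the paper's: argue by contradiction from a failure of $\Ricn(v_0,v_0)\geq K$ and feed infinitesimal sets near $x_0$ into $\bm(K,N)$. But there is a concrete gap in the central claim that, for a convex $E$, the set of $t$-midpoints $M_t^{r,\eps}$ "coincides, up to $O(\eps^2)$, with $\exp_p((2t-1)r v_0+\eps E)$." This is false in general, and the error is exactly the dangerous order. Writing $x=\exp_p(-rv_0+\eps u)$, $y=\exp_p(rv_0+\eps u')$, the differentials of the midpoint map are not scalar multiples of the identity but acquire curvature corrections of size $O(r^2)$ (encoded by $R_{y_0}(x_0)$ in \eqref{eq:hessian_squared_distance}). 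Thus $M_t^{r,\eps}$, at scale $\eps$, is a Minkowski average $D_x m_t(\eps E)+D_y m_t(\eps E)$ of two \emph{different} affine images of $E$, which in general has strictly larger volume (after dividing by $\eps^n$ and letting $\eps\to 0$) than the image under the averaged linear map — by a relative factor $1+O(r^2)$. Since your curvature deficit in \eqref{eq:ric_absurd} only produces a gain of order $r^2$ in the opposite direction, the two $r^2$ terms compete with no definitive sign, and the argument does not close. This is precisely the difficulty the paper flags in its closing remark and resolves in Proposition~\ref{prop:control}.

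The missing idea is the curvature-aligned choice of $E$: the paper takes $E$ to be a \emph{cube with sides along an orthonormal eigenbasis of $R_{y_0}(x_0)$}. With this choice the two linear maps $M_1,M_2$ in \eqref{eq:defM1M2} are simultaneously diagonalized, so the Minkowski average $\tfrac12(M_1 E+M_2 E)$ equals $M_3 E$ with $M_3=\tfrac12(M_1+M_2)$ \emph{exactly}, and the remaining error drops from $O(r^2)$ to $O(r^4)$ (plus $O(\eps)$). Only then does the order-$r^2$ deficit survive. So to repair the proposal you would need to (i) replace "any convex $E$" by a cube diagonalizing the relevant curvature operator, and (ii) track the midpoint-set approximation to fourth order in $r$ rather than second. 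With those two changes your outline essentially reproduces the paper's proof; without them the step "Inserting these expansions into $\bm(K,N)$ ... matching coefficients forces $\Ric_N^V(v_0,v_0)\geq K$" is not justified.
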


As mentioned before, in a weighted Riemannian manifolds $(M,\di_\g,e^{-V}\vol)$, the $\cd(K,N)$ condition is equivalent to the lower Ricci bound $\Ricn\geq K\g$, see Definition \ref{def:ricci_tensor}. Our result allows us to characterize both conditions, without using neither the optimal transport nor the differential structure of the manifold. Moreover, as a corollary of Theorem \ref{thm:intro} and in light of \cite{MR2644212}, $\bm(K,N)$ is equivalent to a modified Borell--Brascamp--Lieb inequality. see \cite[Definition~1.1]{MR2644212} for the precise definition. 

\subsection*{Relations with the \texorpdfstring{$\mcp$}{MCP} condition}
In \cite{Ohta:2007}, the author introduced the so-called \emph{measure contraction property}, $\mcp(K,N)$ for short, for a general metric measure space. This condition for a \emph{non-weighted} Riemannian manifold is equivalent to having the (standard) Ricci tensor bounded below, see \cite[Theorem~3.2]{Ohta:2007}. However, in general, the $\mcp(K,N)$ condition is strictly weaker than the $\cd(K,N)$ condition, and this is also the case for weighted Riemannian manifolds. Theorem \ref{thm:intro} confirms that $\bm(K,N)$ is much closer to the $\cd(K,N)$ condition than the $\mcp(K,N)$ condition is. 

We mention that in \cite{MR3935035}, in ideal \sr manifolds, a different version of the Brunn--Minkowski inequality has been studied. When $K=0$, this turns out to be equivalent to the $\mcp(0,N)$ condition, thus it is strictly weaker than $\bm(0,N)$.

\subsection*{Strategy of the proof of Theorem \ref{thm:intro}} The idea of the proof is to deduce the differential characterization of the $\cd(K,N)$ condition, arguing by contradiction. Thus, we assume there exists $v_0\in T_{x_0}M$, with $x_0\in M$ such that
\begin{equation}
\label{s}
    \Ricn_{x_0}(v_0,v_0)<K\|v_0\|^2,
\end{equation}
and then find two subsets $A,B\subset M$ contradicting $\bm(K,N)$. The first step is to build a suitable optimal transport map $T$ moving the mass in a neighborhood of $x_0$ in the direction $v_0$, see Section \ref{sec:choice_kant}. The second step is to estimate the infinitesimal volume distortion around the geodesic $\gamma(t)=\exp_{x_0}(tv_0)$, joining $x_0$ and $T(x_0)$, cf. Proposition \ref{prop:upper_bound_hatD}. By means of a comparison principle for ordinary differential equations (cf. Lemma~\ref{lem:1d_comparison}), the condition \eqref{s} implies that
\begin{equation}
\label{eq:volume_distortion_intro}
    \wei\left(T_{\frac12}(A)\right)^{\frac1N}< \tau_{K,N}^{\left(\frac12\right)}(\Theta(A,B))\left(\wei(A)^\frac1N+\wei(B)^\frac1N\right), 
\end{equation}
where $T_{\frac12}$ is the interpolating optimal transport map, $A\subset M$ is any sufficiently small neighborhood of $x_0$ and $B:=T(A)$. For the technical definitions of the distortion coefficients $\tau_{K,N}^{(t)}(\cdot)$ and $\Theta(A,B)$, see Definitions \ref{def:distortion} and \ref{def:bruno}. The final, and most challenging, step is to compare the measure of $T_{\frac12}(A)$ with the measure of $M_{\frac12}(A,B)$, the set of midpoints between $A$ and $B$, cf. Definition \ref{def:tmidpoints}. This is done through a careful analysis of the behavior of the map $T$ and choosing as $A$ a specific cube oriented according to the Riemann curvature tensor at $x_0$,  cf. Section~\ref{sec:choice_basis}. We then obtain
\begin{equation}
\label{eq:comparison_intro}
    \wei\left(M_{\frac12}(A,B)\right)\approx \wei(T_{\frac12}(A)),
\end{equation}
which, together with \eqref{eq:volume_distortion_intro}, gives a contradiction with $\bm(K,N)$. The relation \eqref{eq:comparison_intro} is made rigorous in Proposition \ref{prop:control}, whose proof is based on the linearization of the map $T_{\frac12}$. Remarkably, a second-order expansion capturing the local geometry of the manifold (involving in particular the Riemann curvature tensor at $x_0$) is needed.

\subsection*{Open problems}
It would be relevant to extend Theorem \ref{thm:intro} to general \emph{essentially non-branching} metric measure spaces. Indeed, on the one hand this would produce an equivalent characterization oh the $\cd$ condition without the need of optimal transport. On the other hand, it would provide an alternative proof of the \emph{globalization theorem}, cf. \cite{MR4309491}, using \cite[Theorem~1.2]{MR3608721}.
In \cite{Magnabosco-Portinale-Rossi:2022b}, we prove that in essentially non-branching metric measure spaces, the $\cd(K,N)$ condition is in fact equivalent to a stronger version of $\bm(K,N)$, denominated \emph{strong Brunn--Minkowski condition}. However, the equivalence between the two, at this level of generality, seems to be out of reach with the techniques developed up to now. Nonetheless, one may hope to adapt our strategy either to the setting of Finsler manifolds or to the one of $\rcd$ spaces, where a second-order calculus is available.

\subsection*{Acknowledgments} 
L.P. acknowledges support by the Hausdorff Center for Mathematics in
Bonn. T.R. acknowledges support from the Deutsche Forschungsgemeinschaft (DFG, German Research Foundation) through the collaborative research centre ``The mathematics of emerging effects'' (CRC 
1060, Project-ID 211504053). The authors are grateful to Prof. Karl-Theodor Sturm for inspiring discussions on the topic.

\section{Preliminaries}
    \label{sec:preliminaries}
In this section, we introduce the general framework of interest, recalling basic facts about optimal transport and Riemannian manifolds. Moreover, we present the Brunn--Minkowski inequality and state our main result.

\subsection{Riemannian manifolds}
Let $(M,\g)$ be a Riemannian manifold of dimension $n \in \N$. Let $TM$ denote the tangent bundle of $M$, and for $x \in M$,  $T_xM$ the tangent space at $x$. For simplicity of notation, whenever it does not create ambiguity, we write for $x \in M$ and $v,w \in T_x M$
\begin{align}
    \langle v, w \rangle := \g_x(v,w)
        \tand 
    \| v \|^2 := \g_x(v,v) 
        \, .
\end{align}
Let $\di_\g$ the Riemannian distance associated with $\g$, defined by length--minization procedure, and we say that $(M,\g)$ is complete if $(M,\di_\g)$ is a complete metric space. Furthermore, for $x\in M$, we denote by $\exp_x: T_xM \to M$ the exponential map, i.e. $\exp_x(v) = \gamma_{x,v}(1)$, where $\gamma_{x,v}$ denotes the geodesic on $M$ such that $\gamma_{x,v}(0)=0$ and $\dot \gamma_{x,v}(0)=v \in T_xM$ (whenever it is well-defined for $t=1$).
Denote by $\nabla$ the associated Levi-Civita connection on $(M,\g)$ and, for $X \in TM$, by $\nabla_X$ the covariant derivative along the vector field $X$. Then the Riemann curvature tensor is defined as
\begin{align}
    \label{eq:def_rie}
    \rie(X,Y) := \nabla_X \nabla_Y - \nabla_Y \nabla_X + \nabla_{[X,Y]}
        \, , \quad 
    X,Y \in TM \, .
\end{align}
The Ricci tensor is obtained by taking suitable traces of the Riemann tensor (more precisely it is the trace of the sectional curvature tensor). In details, one has that
\begin{align}
    \Ric: TM \times TM \to \R 
        \, , \quad 
    \Ric(Y,Z) := \tr 
    \big(
        X \mapsto \rie(X,Y)Z 
    \big)
        \, .
\end{align}
Both the Riemann and the Ricci tensor naturally appears in the study of the volume deformation along geodesics, see Section~\ref{sec:infinitesimal}.

In particular, the Ricci tensor is closely related to convexity properties of entropy functionals along the geodesics of optimal transport. In the framework of weighted Riemannian manifolds a similar role is played by a modified version of the Ricci tensor, which depends on a dimensional parameter $N \in \N$ and a reference measure $\wei$.

\begin{defn}[Modified Ricci tensor]
\label{def:ricci_tensor}
    Let $(M,\g)$ be a Riemannian manifold, let $V\in C^2(M)$ and consider the measure $\wei=e^{-V}\vol$ on $M$, where $\vol$ is the Riemannian measure. Fix $N \geq n$. Then the $(N,\wei)$-modified Ricci tensor is given by
    \begin{align}
        \Ricn := \Ric + \nabla^2 V - 
        \frac
            {\nabla V \otimes \nabla V}
            {N-n}
        \, . 
    \end{align}
    Here $\nabla^2 V$ denotes the Hessian of $V$, suitably identified to a bilinear form.
    With the convention that $0 \cdot \infty =0 $, if $N=n$ then necessarily $\nabla V =0$, and thus $\Ricn = \Ric$.
\end{defn}

\subsection{Optimal transport and curvature}
Let $(\X,\mathsf{d},\wei)$ be a metric measure space, i.e.  $(\X,\mathsf{d})$ is a complete and separable metric space and $\wei$ is a non-negative Borel measure on $\X$, finite on bounded sets. Denote by $C([0, 1], \X)$ the space of continuous curves from $[0, 1]$ to $\X$, and define the $t$-\textit{evaluation map} as $e_t:C([0,1],\X) \to \X$; $e_t(\gamma):= \gamma(t)$, for $\gamma \in C([0,1],\X)$. A curve $\gamma\in C([0, 1], \X)$ is called \textit{geodesic} if 
\begin{equation}
    \di(\gamma(s), \gamma(t)) = |t-s| \cdot \di(\gamma(0), \gamma(1)) \quad \text{for every }s,t\in[0,1]\,.
\end{equation}
We denote by $\Geo(\X) \subset C([0,1],\X)$ the space of constant speed geodesics in $(\X,\di)$. The metric space $(\X,\di)$ is said to be \textit{geodesic} if every two points are connected by a curve in $\Geo(\X)$. Note that any complete Riemannian manifold is a geodesic metric space.

Denote by $\mathscr{P}(\X)$ the set of all Borel probability measures on $\X$ and by $\mathscr{P}_2(\X) \subset \mathscr{P}(\X)$ the set of all probability measures with finite second moment. The 2-Wasserstein distance $\bW_2$ is a distance on the space $\mathscr{P}_2(\X)$ defined by
\begin{align}\label{eq:W2}
    \bW_2^2(\mu,\nu):=\inf_{\pi\in{\text{Adm}}(\mu,\nu)}
        \int_{\X\times \X}\di(x,y)^2  \de\pi(x,y) \, , 
\end{align}
for $\mu$, $\nu \in \Prob_2(M)$, where $\text{Adm}(\mu,\nu):=\{\pi\in\Prob(\X\times\X): (\p_1)_\# \pi= \mu,\,(\p_2)_\# \pi= \nu\}$ is the set of admissible plans. Here $\p_i: \X \times \X \to \X$ denotes the projection on the $i$-th factor. The infimum in \eqref{eq:W2} is always attained, the admissible plans realizing it are called \emph{optimal transport plans} and are denoted by $\text{Opt}(\mu,\nu)$. Whenever a plan $\pi \in \text{Opt}(\mu,\nu)$ is induced by a map $T:\X \to \X$ if $\pi=(\text{id},T)_\# \mu$, we say that $T$ is an \emph{optimal transport map}.  
It turns out that $\bW_2$ defines a complete and separable distance on $\Prob_2(\X)$ and, moreover $(\Prob_2(\X),\bW_2)$ is geodesic if and only if $(X,\di)$ is.

In this paper, we work in the setting of weighted Riemannian manifolds, namely considering the metric measure space $(M,\di_\g,\wei)$, where $\wei= e^{-V} \vol$.
In this framework, whenever $\mu$ is absolutely continuous with respect to $\vol$, then the optimal plan is unique and induced by an optimal transport map $T: M \to M$, see \cite{Benamou-Brenier:2000}, \cite{McCann:2001}. Moreover, $T$ is driven by the gradient of the so-called \textit{Kantorovich potential} $-\psi: M \to \R$ via the exponential map as
\begin{align*}
    T(x) = \exp_x (\nabla \psi(x))
        \, , \quad \forall x \in M \, ,
\end{align*}
where $\psi$ is a semiconvex function, cf. \cite[Definition~10.10]{Villani:2009}.

Since the seminal works of Sturm \cite{Sturm:2006-I,Sturm:2006-II} and Lott--Villani \cite{Lott-Villani:2009}, it is known that lower bounds on the (modified) Ricci curvature tensor can be recast in a synthetic way in terms of a suitable entropy convexity property. The latter is formulated in terms of the R\'enyi entropy functional and the distortion coefficients. 

\begin{defn}[R\'enyi entropy functional] Let $(\X,\di,\wei)$ be a metric measure space and fix $N>1$. The $N$-\emph{R\'enyi entropy functional} on $\Prob_2(\X)$ is defined as 
\begin{equation}
    \mathcal{E}_N (\mu)=  -\int_{\X} \rho(x)^{1-\frac{1}{N}} \, \de\wei(x) \qquad \forall \mu \in \Prob_2(\X)\,,
\end{equation}
where $\rho$ is the density of the absolutely continuous part of $\mu$, with respect to $\wei$.
\end{defn} 

\begin{defn}[Distortion coefficients]   \label{def:distortion}
    For every $K \in \R$, $N >0$, we define for $\theta \geq 0$
\begin{align}
    \sigma_{K,N}^{(t)} (\theta) := 
    \left\{
        \begin{array}{ll}   
            +\infty 
                &  N \pi^2\leq K \theta^2 \\ \displaystyle 
            \frac
            {\sin(t \alpha)}
            {\sin(\alpha)} 
                & 0<K \theta^2 < N \pi^2 \\  
            t 
                & K=0  \\
                    \displaystyle 
            \frac
            {\sinh(t \alpha)}
            {\sinh(\alpha)}  & K<0
        \end{array}
    \right. 
        \, , \quad 
    \alpha := \theta \sqrt{\frac{|K|}{N}} 
        \, ,
\end{align}
 while for $K \in \R$ and $N >1$ we introduce the \textit{distortion coefficients} for $t \in [0,1]$ as
 \begin{equation}
     \tau_{K, N}^{(t)}(\theta):=t^{\frac{1}{N}} \sigma_{K, N-1}^{(t)}(\theta)^{1-\frac{1}{N}} \, .
 \end{equation}
\end{defn}


\begin{defn}[$\cd(K,N)$ space]
Given $K \in \R$ and $N >1$, a metric measure space $(\X,\di,\wei)$ is said to be a $\cd(K,N)$ \emph{space} (or to satisfy the $\cd(K,N)$ condition) if for every pair of measures $\mu_0=\rho_0\wei$, $\mu_1= \rho_1 \wei \in \Prob_2(\X)$, there exists $\eta \in \Prob(\Geo(\X))$ such that $\mu_t :=(e_t)_\# \eta=: \mu_t \ll \wei$ is $\bW_2$-geodesic from $\mu_0$ and $\mu_1$ which satisfies the following inequality, for every $N'\geq N$ and every $t \in [0,1]$:
\begin{equation}\label{eq:CDcond}
    \cE_{N'}(\mu_t) 
        \leq 
    -\int_{\X \times \X} 
    \Big[ 
        \tau^{(1-t)}_{K,N'} \big( \di(x,y) \big)     
            \rho_{0}(x)^{-\frac{1}{N'}} 
                +
        \tau^{(t)}_{K,N'} \big(\di(x,y) \big) 
            \rho_{1}(y)^{-\frac{1}{N'}} 
    \Big]   
        \de \pi(x,y) \, ,
\end{equation}
where $\pi= (e_0,e_1)_\# \eta$ is the optimal plan between $\mu_0$ and $\mu_1$ induced by $\eta$.
\end{defn}

Note that in the case $K=0$, the distortion coefficients are linear in $t$, and the  $\cd$ condition simply becomes convexity of the R\'enyi entropy functional along $\bW_2$-geodesics.
We end this section stating an equivalence result between the $\cd$ condition and a Ricci bound for weighted Riemannian manifolds, which plays a crucial role in the sequel.

\begin{thm}[Equivalence theorem \cite{Sturm:2006-II}, \cite{Lott-Villani:2009}]
\label{thm:equiv_CD_RicN}
    Let $K \in \R$ and $N> 1$ and let $(M,\g)$ be a complete Riemannian manifold of dimension $n$. For $N\geq n$, the metric measure space $(M,\di_g, e^{-V} \emph{vol}_\g)$ is a $\cd(K,N)$ space if and only if 
    \begin{align}   \label{eq:lowerbound_modRicci}
        \emph{\Ric}^{N,\wei} \geq K 
            \qquad 
        (
        \text{thus: }
            \emph{\Ric}^{N,\wei}_x(v,v) \geq K \| v \|^2 
                \, , \quad 
            \forall x \in M , \, v \in T_xM 
        )
           \, .
    \end{align}
\end{thm}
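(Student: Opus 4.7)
The plan is to prove both implications separately, as the statement is an equivalence. The forward direction ($\Ric^{N,\wei}\geq K \Rightarrow \cd(K,N)$) is the analytically harder one and requires infinitesimal volume distortion estimates along Wasserstein geodesics, while the converse is a localization argument via suitably concentrated test measures.

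For the forward direction, I would first reduce (by a standard truncation/approximation procedure together with the lower semicontinuity of $\cE_{N'}$ with respect to weak convergence) to the case of absolutely continuous measures $\mu_0 = \rho_0\wei$, $\mu_1=\rho_1\wei$ with compactly supported bounded densities. By the Brenier--McCann theorem on Riemannian manifolds, there exists a unique optimal transport plan, induced by a map $T(x)=\exp_x(\nabla\psi(x))$ for a semiconvex Kantorovich potential $-\psi$, and the unique $\bW_2$-geodesic is given by $\mu_t=(T_t)_\#\mu_0$ with $T_t(x):=\exp_x(t\nabla\psi(x))$. The key analytic object is the Jacobian $J_t(x):=\det dT_t(x)$, which along each geodesic $t\mapsto T_t(x)$ is controlled by the Jacobi equation; introducing the weighted Jacobian
\begin{equation}
    \cJ_t(x) := e^{V(x)-V(T_t(x))} J_t(x),
\end{equation}
one obtains the Monge--Amp\`ere-type identity $\rho_t(T_t(x))\,\cJ_t(x)=\rho_0(x)$ $\mu_0$-a.e.

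The crucial step is then to prove that along each transport geodesic $t\mapsto T_t(x)$, the function $t\mapsto \cJ_t(x)^{1/N}$ satisfies the $\sigma_{K,N-1}$-concavity inequality
\begin{equation}
    \cJ_t(x)^{1/N} \geq \sigma_{K,N-1}^{(1-t)}(\theta_x)\,\cJ_0(x)^{1/N} + \sigma_{K,N-1}^{(t)}(\theta_x)\,\cJ_1(x)^{1/N},
\end{equation}
where $\theta_x=\di_\g(x,T(x))$. This follows by writing a scalar Riccati equation for $(\log\cJ_t)'$ (collecting the Jacobi field trace and the Hessian of $V$), applying Cauchy--Schwarz to trade the trace of the squared shape operator against the square of its trace (dimension-$N$ version via the $(\nabla V\otimes\nabla V)/(N-n)$ correction term), and invoking the hypothesis $\Ric^{N,\wei}\geq K$ pointwise along the geodesic. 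A one-dimensional ODE comparison (as in Lemma~\ref{lem:1d_comparison}) then yields the inequality on $\cJ_t^{1/N}$. Raising to the power $N$, substituting in the change of variables, and integrating against $\wei$ via Jensen/concavity gives exactly the $\cd(K,N')$ inequality for every $N'\geq N$ along the chosen geodesic.

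For the converse direction I would argue by contradiction: if $\Ric^{N,\wei}_{x_0}(v_0,v_0)<K\|v_0\|^2$ at some $(x_0,v_0)$, I build two measures $\mu_0,\mu_1$ uniformly distributed on small neighborhoods of $x_0$ and $\exp_{x_0}(\eps v_0)$ respectively (a shifted copy), so that the optimal transport is essentially a translation in direction $v_0$. Applying the $\cd(K,N)$ inequality to this pair and performing a Taylor expansion in $\eps$ of both sides, the zeroth and first orders cancel, and the second-order coefficient isolates precisely the quantity $\Ric^{N,\wei}_{x_0}(v_0,v_0)-K\|v_0\|^2$. Negativity of this coefficient then contradicts the $\cd(K,N)$ inequality for $\eps$ small enough.

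The main obstacle is the concavity estimate for $\cJ_t^{1/N}$: it requires the interplay between the second-variation formula for Jacobi fields (which produces $\Ric$), the Hessian of the weight $V$, and the sharp trace inequality that explains the appearance of the $(\nabla V\otimes\nabla V)/(N-n)$ correction in $\Ric^{N,\wei}$. Getting the right constants so that the resulting ODE comparison reproduces exactly the coefficients $\sigma_{K,N-1}^{(t)}$ (and hence $\tau_{K,N}^{(t)}$ after weighting by $t^{1/N}$) is the delicate part; everything else is either classical smooth optimal transport on manifolds or soft approximation.
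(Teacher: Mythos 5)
The paper itself does not prove this theorem: it is stated as a preliminary, cited to Sturm and Lott--Villani (with Cordero-Erausquin--McCann--Schmuckenschl\"ager and Ohta behind the respective directions), and used only to translate Theorem~\ref{thm:intro} into Theorem~\ref{thm:main}. Your proposal reconstructs the literature proof and the two-directional strategy is the right one, but there is a concrete gap in the forward direction as written.

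The mechanism you describe --- a scalar Riccati inequality for $(\log\cJ_t)'$, Cauchy--Schwarz to pass from $\trace(U^2)$ to $(\trace U)^2/n$ and thence to $(\trace U)^2/N$ via the $(\nabla V\otimes\nabla V)/(N-n)$ correction, followed by a one-dimensional ODE comparison --- produces $\sigma_{K,N}^{(t)}(\theta)$-concavity of $\cJ_t^{1/N}$, i.e.\ the \emph{reduced} curvature-dimension condition $\cd^*(K,N)$, not the sharp one. The displayed inequality
\begin{align}
    \cJ_t(x)^{1/N} \geq \sigma_{K,N-1}^{(1-t)}(\theta_x)\,\cJ_0(x)^{1/N} + \sigma_{K,N-1}^{(t)}(\theta_x)\,\cJ_1(x)^{1/N}
\end{align}
is in fact \emph{false} for $K>0$, since there $\sigma_{K,N-1}^{(t)}(\theta)>\tau_{K,N}^{(t)}(\theta)$; the correct sharp estimate has $\tau_{K,N}^{(t)}$ in place of $\sigma_{K,N-1}^{(t)}$. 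The trace argument alone cannot produce it: one must split the Jacobi matrix into its component along $\dot\gamma$, whose determinant factor is exactly $t$ (contributing the bare $t^{1/N}$), and the $(n-1)$-dimensional block on the orthogonal complement of $\dot\gamma$, whose $1/(N-1)$-th power is $\sigma_{K,N-1}$-concave under the weighted Ricci bound restricted to that complement. These are then recombined via H\"older's inequality (or the arithmetic--geometric mean) to yield $\tau_{K,N}^{(t)}(\theta)=t^{1/N}\sigma_{K,N-1}^{(t)}(\theta)^{1-1/N}$. Alternatively one can stay with $\cd^*(K,N)$ and invoke its local-to-global equivalence with $\cd(K,N)$, but that is a separate nontrivial input. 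The converse direction you sketch --- concentrated test measures and a second-order Taylor expansion in the separation parameter --- is sound and matches Sturm's Theorem~1.7 and Ohta's Theorem~1.2.
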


\subsection{Brunn--Minkowski inequality}
We now introduce a generalized version of the Brunn--Minkowski inequality, tailored to a curvature  parameter and a dimensional one. As proven by Sturm in \cite{Sturm:2006-II}, this is a consequence of the $\cd$ condition. 

\begin{defn}[$t$-midpoints]\label{def:tmidpoints}
    Let $(\X,\di)$ be a metric space. Let $A,B \subset \X$ be two Borel subsets. Then for $t\in (0,1)$, we defined the set of $t$-midpoints between $A$ and $B$ as
    \begin{align}
        M_t(A,B) := 
        \{
            x \in \X \, : \, x = \gamma(t)
                \, , \, 
            \gamma \in \Geo(\X)
                \, , \, 
            \gamma(0) \in A \, , 
                \ \text{and} \  
            \gamma(1) \in B
        \} 
            \, .
    \end{align}
\end{defn}

\begin{defn}[Brunn--Minkowski inequality]
\label{def:bruno}
    Let $K \in \R$, $N >1$. Then we say that a metric measure space $(\X,\di, \wei)$ satisfies the Brunn--Minkowski inequality $\bm(K,N)$ if for every $A,B \subset \spt(\wei)$ Borel subsets, $t \in (0,1)$, we have
        \begin{align}   \label{eq:bm}
            \wei \big(M_t(A,B)\big) \big)^ \frac{1}{N} 
                \geq 
            \tau_{K,N}^{(1-t)} (\Theta(A,B)) \cdot \wei(A)^ \frac{1}{N} 
                + 
            \tau_{K,N}^{(t)} (\Theta(A,B)) \cdot \wei(B)^ \frac{1}{N}
                \, ,
        \end{align}
        where 
    \begin{align}   \label{eq:def_Theta}
        \Theta(A,B):=
        \left\{
        \begin{array}{ll}\displaystyle    \inf_{x \in A,\, y \in B} \di(x, y) 
                & \text { if } K \geq 0 \, , \\ 
        \displaystyle
            \sup _{x \in A,\, y \in B} \di(x, y) 
                & \text { if } K<0 \, .
        \end{array}
        \right. 
    \end{align}
\end{defn}

\begin{rmk}
\label{rmk:measurability_Mt}
In general the set $M_t(A,B)$ is not Borel measurable, even if the sets $A$ and $B$ are Borel. Therefore, when $M_t(A,B)$ is not measurable, the left-hand side of \eqref{eq:bm} has to be intended with the outer measure $\bar \wei$ associated with $\wei$, in place of $\wei$ itself.
\end{rmk}

\begin{prop}[{\cite[Proposition~2.1]{Sturm:2006-II}}]
    Let $K \in \R$, $N >1$ and let $(\X,\di, \wei)$ be a metric measure space satisfying the $\cd(K,N)$ condition. Then, $(\X,\di, \wei)$ satisfies $\bm(K,N)$.
\end{prop}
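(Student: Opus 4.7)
The plan is to feed the uniform probability measures supported on $A$ and $B$ into the $\cd(K,N)$ inequality \eqref{eq:CDcond} and read off \eqref{eq:bm} directly. First, I reduce to the case $0<\wei(A),\wei(B)<+\infty$: if either measure vanishes, \eqref{eq:bm} is trivial (with the convention $\tau\cdot 0=0$), while if either is infinite one approximates from inside by bounded Borel subsets and uses monotonicity of both sides in the underlying sets. After a further localization to bounded subsets (so that second moments are finite), I set
\begin{equation}
\mu_0:=\frac{\one_A}{\wei(A)}\,\wei\qquad\text{and}\qquad \mu_1:=\frac{\one_B}{\wei(B)}\,\wei,
\end{equation}
and invoke $\cd(K,N)$ with $N'=N$ to obtain $\eta\in\Prob(\Geo(\X))$ such that $\mu_t:=(e_t)_\#\eta=\rho_t\wei$ is a $\bW_2$-geodesic from $\mu_0$ to $\mu_1$ satisfying \eqref{eq:CDcond}.

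Two simple observations then close the argument. First, $\eta$ is concentrated on geodesics with endpoints in $A\times B$, so the interpolant $\mu_t$ is concentrated on $M_t(A,B)$. Since $s\mapsto s^{1-1/N}$ is concave on $[0,+\infty)$ and $\int\rho_t\,\de\wei=1$, Jensen's inequality (equivalently H\"older) yields
\begin{equation}
-\cE_N(\mu_t)=\int\rho_t^{1-1/N}\,\de\wei\le \wei\bigl(M_t(A,B)\bigr)^{1/N}.
\end{equation}
Second, a short trigonometric/hyperbolic calculation shows that, for fixed $t\in[0,1]$, the distortion coefficient $\theta\mapsto\tau_{K,N}^{(t)}(\theta)$ is nondecreasing when $K\ge 0$ and nonincreasing when $K<0$. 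Since $\spt(\pi)\subseteq A\times B$ for the optimal plan $\pi=(e_0,e_1)_\#\eta$, the definition \eqref{eq:def_Theta} of $\Theta(A,B)$ forces $\tau_{K,N}^{(s)}(\di(x,y))\ge \tau_{K,N}^{(s)}(\Theta(A,B))$ on $\spt(\pi)$ for $s\in\{t,1-t\}$, in both sign regimes of $K$.

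Inserting the explicit densities $\rho_0(x)=\wei(A)^{-1}$ on $A$ and $\rho_1(y)=\wei(B)^{-1}$ on $B$ into the right-hand side of \eqref{eq:CDcond} and combining with the two bounds above produces
\begin{equation}
-\wei\bigl(M_t(A,B)\bigr)^{1/N}\le -\tau_{K,N}^{(1-t)}(\Theta(A,B))\,\wei(A)^{1/N}-\tau_{K,N}^{(t)}(\Theta(A,B))\,\wei(B)^{1/N},
\end{equation}
which, after multiplication by $-1$, is exactly \eqref{eq:bm}. The technical points I expect to address are the possible non-measurability of $M_t(A,B)$, handled via the outer-measure convention of Remark~\ref{rmk:measurability_Mt} (the interpolant $\mu_t$ still has full outer mass on $M_t(A,B)$), the reduction to bounded sets for $\mu_0,\mu_1\in\Prob_2(\X)$, and the elementary monotonicity of $\tau$ separately in the cases $K>0$, $K=0$, $K<0$. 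None of these is a serious obstacle; the entire proof really comes down to the one-line combination above.
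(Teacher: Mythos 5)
Your proof is correct, and it is essentially the argument Sturm gives for \cite[Proposition~2.1]{Sturm:2006-II} (which the paper cites rather than reproves): feed the normalized indicators $\mu_0=\wei(A)^{-1}\one_A\wei$ and $\mu_1=\wei(B)^{-1}\one_B\wei$ into \eqref{eq:CDcond} with $N'=N$, bound $-\cE_N(\mu_t)\le\wei(M_t(A,B))^{1/N}$ by H\"older since the interpolant is concentrated on $M_t(A,B)$, and replace $\di(x,y)$ by $\Theta(A,B)$ inside the integral via the monotonicity of $\theta\mapsto\tau_{K,N}^{(s)}(\theta)$ (nondecreasing for $K\ge 0$, nonincreasing for $K<0$), which matches the $\inf$/$\sup$ dichotomy in \eqref{eq:def_Theta}. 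The technical caveats you flag — finiteness and positivity of the masses, truncation to ensure finite second moments followed by exhaustion and inner regularity, and the outer-measure reading when $M_t(A,B)$ is not Borel — are exactly the ones that need to be dispatched, and you dispatch them appropriately.
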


A remarkable feature of this result lies in the sharp dependence of the Brunn--Minkowski inequality on the curvature exponent $K\in\R$ and the dimensional parameter $N>1$. In a weighted Riemannian manifold, using the equivalence result of Theorem \ref{thm:equiv_CD_RicN}, we prove that the \emph{sharp} Brunn--Minkowski inequality is enough to deduce the $\cd$ condition, with the \emph{same constants}. We are in position to state our main result, which is a rephrasing of Theorem \ref{thm:intro}, in view of Theorem \ref{thm:equiv_CD_RicN}.

\begin{thm}[$\bm(K,N)\Rightarrow\Ricn\geq K$]
\label{thm:main}
    Let $(M,\g)$ be a complete Riemannian manifold of dimension $n$, endowed with the reference measure $\wei= e^{-V} \vol$, where $V \in C^2(M)$. Suppose that the metric measure space $(M,\di_\g, \wei)$ satisfies $\bm(K,N)$ for some $K\in\R$ and $N>1$. Then $\Ricn\geq K$, in the sense of \eqref{eq:lowerbound_modRicci}.
\end{thm}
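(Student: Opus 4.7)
My plan is to argue by contradiction via Theorem~\ref{thm:equiv_CD_RicN}: assume there exist $x_0 \in M$ and $v_0 \in T_{x_0}M$ with
\begin{equation}
\Ricn_{x_0}(v_0,v_0) < K\|v_0\|^2,
\end{equation}
and build two small Borel sets $A, B \subset M$ whose midpoints violate $\bm(K,N)$ at time $t = 1/2$. First I would fix a smooth, semiconvex Kantorovich potential $\psi$ on a neighborhood $U$ of $x_0$ with $\nabla\psi(x_0) = v_0$. Setting $T(x) := \exp_x(\nabla\psi(x))$ and $T_t(x) := \exp_x(t\nabla\psi(x))$, the map $T$ is optimal for any source measure $\mu_0 \ll \wei$ concentrated in $U$, and $t \mapsto (T_t)_\# \mu_0$ is the unique $\bW_2$-geodesic from $\mu_0$ to $T_\# \mu_0$.

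Second, I would analyze the volume distortion along the central geodesic $\gamma(t) := T_t(x_0) = \exp_{x_0}(tv_0)$. The differential of $T_t$ along $\gamma$ obeys a matrix Riccati equation whose trace, once reweighted by $e^{-V}$, is driven precisely by $\Ricn$. Combining this with a one-dimensional ODE comparison argument (Lemma~\ref{lem:1d_comparison}) and the strict hypothesis above, I would deduce that for any sufficiently small geodesic cube $A$ centered at $x_0$ and $B := T(A)$,
\begin{equation}
\label{eq:strict_distortion}
\wei(T_{1/2}(A))^{1/N} < \tau_{K,N}^{(1/2)}(\Theta(A,B)) \bigl( \wei(A)^{1/N} + \wei(B)^{1/N} \bigr).
\end{equation}
This is the local, infinitesimal failure of the $\cd$-type inequality at $x_0$, obtained on the transported image of $A$.

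The hard step is the third one: upgrading \eqref{eq:strict_distortion} to an inequality about the full midpoint set $M_{1/2}(A,B)$. A priori $T_{1/2}(A) \subset M_{1/2}(A,B)$, but the containment may be strict, since midpoints of geodesics joining $x \in A$ to $y \in B$ with $y \neq T(x)$ need not lie in $T_{1/2}(A)$. To close the argument I would linearize the midpoint map $(x,y) \mapsto \exp_x(\tfrac12 \exp_x^{-1}(y))$ around $(x_0, T(x_0))$ and show that
\begin{equation}
\wei(M_{1/2}(A,B)) = (1 + o(1))\, \wei(T_{1/2}(A)) \qquad \text{as the size of $A$ tends to $0$}.
\end{equation}
A first-order expansion alone is not sufficient: by the symmetric construction of $B = T(A)$, the leading discrepancy vanishes, and only a second-order expansion reveals the genuine overshoot, inherently governed by the Riemann tensor at $x_0$. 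This forces a careful choice of $A$ as a geodesic cube oriented along an eigenbasis adapted to the curvature operator $w \mapsto \rie(w,v_0)v_0$, so that the second-order cross-terms cancel and the remaining error is negligible relative to the strict gap in \eqref{eq:strict_distortion}. I expect this coordinate-dependent second-order analysis to be the main obstacle; granting it, chaining the two estimates produces the required violation of $\bm(K,N)$ and contradicts the assumption.
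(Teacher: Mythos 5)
Your proposal matches the paper's strategy in all essential respects: the contradiction setup via Theorem~\ref{thm:equiv_CD_RicN}, the Riccati/one-dimensional comparison argument giving a strict $O(\lambda^2)$ gap for the interpolated Jacobian, and — crucially — the second-order linearization of the midpoint map with the initial cube oriented along an eigenbasis of the curvature operator $w\mapsto\rie(w,v_0)v_0$, so that the midpoint/transport-interpolant discrepancy is only $O(\lambda^4+\eps)$. One technical point your sketch leaves open: the Riccati equation for the weighted Jacobian is driven by $\Ricn$ \emph{plus} a nonnegative remainder $\cE_x(t)$, and the paper must kill this at $t=0$ by prescribing $\nabla^2\psi(x_0)=\alpha_0\,{\rm Id}$ with $\alpha_0=-\g(\nabla V(x_0),v_0)/(N-n)$ (conditions \eqref{eq:conditions_psi}); merely fixing $\nabla\psi(x_0)=v_0$ would not control $\cE_{x_0}$ and the comparison inequality would not close.
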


\begin{rmk}
In \cite[Theorem~1.7]{Sturm:2006-II} and \cite[Theorem~1.2]{Ohta:2009}, the authors prove the implication $\cd(K,N)\Rightarrow\Ricn\geq K$. Their argument is based on an inequality involving the $t$-midpoints, which would imply Theorem~\ref{thm:main}. However, they do not address the problem of comparing the latter set with the support of the interpolating optimal transport, cf. Proposition~\ref{prop:control}, which is the crucial and most challenging step. We stress that our choice of sets is more involved, precisely with the aim of obtaining a better control on the $t$-midpoints, which was lacking in the previous constructions. Finally, we point out that their argument works verbatim replacing the $t$-midpoints with the support of the interpolating optimal transport, nonetheless it does not provide an effective strategy for Theorem~\ref{thm:main}.
\end{rmk}

\section{Brunn--Minkowski implies \texorpdfstring{$\cd$}{CD}}
    \label{sec:proof}
Our strategy to prove Theorem \ref{thm:main} is to proceed by contradiction: let $(M,\di_\g,\wei)$ support $\bm(K,N)$, and assume there exist $\delta>0$, $x_0\in M$, and $v_0\in T_{x_0}M$, with $\|v_0\|=1$, such that 
\begin{equation}    \label{eq:ric_absurd}
    \Ricn_{x_0}(v_0, v_0)<(K-3\delta) \, .
\end{equation}
More precisely, taking $\lambda$ small enough, we can assume
\begin{gather}
\label{eq:geodesic_v0_small}
    \gamma^\lambda(t) := \exp_{x_0}(t \lambda v_0)
         \  \text{is a geodesic on $(M,g)$ from $x_0$ to $\gamma^\lambda(1)$} \, , \\
\label{eq:upper_bound}
    \Ricn_{\gamma^\lambda(t)}(\dot \gamma^\lambda(t) ,  \dot \gamma^\lambda(t))<(K-2\delta)\lambda^2 \, , 
        \ \ \text{for every $t \in [0,1]$} \, .
\end{gather}
The idea is to exploit the fact that the generalized Ricci tensor controls the infinitesimal distortion of volumes around $\gamma^\lambda$, to build explicitly two sets $A$, a neighborhood of $x_0$, and $B$, a neighborhood of $\gamma^\lambda(1)$, contradicting the Brunn--Minkowski inequality $\bm(K,N)$. 

\subsection{Infinitesimal volume distortion}
    \label{sec:infinitesimal}
 In this section, we recall general facts regarding infinitesimal volume distortion around a geodesic starting at any point $x\in M$, see \cite[Chapter~14]{Villani:2009}. In the next section, we specialize the results to the geodesic $\gamma^\lambda$.

To capture the infinitesimal volume distortion given by the (generalized) Ricci tensor, consider a transport map 
\begin{equation}
    T\colon M\rightarrow M,\qquad T(z)=\exp_x(\nabla\psi(z)) \, ,
\end{equation}
where $\psi\in C^3_c(M)$. In analogous way, the transport map interpolating the identity and $T$ is given by $T_t(z)=\exp_x(t\nabla\psi(z))$.
Fix an orthonormal basis $\{e_1, \dots, e_n \}$ of $T_xM$ and let $Q_\eps(x) \subset M$ be the image, via the exponential map $\exp_x$, of the cube of size $\eps>0$ centered at the point $x$ with sides given by the $e_i$.
Define the (weighted) \emph{Jacobian determinant} by
\begin{equation}
\cJ_x(t):=
\lim_{\eps\to 0}
    \frac{\wei(T_t(Q_\eps(x)))}{\wei(Q_\eps(x))}
=   
    \frac{e^{-V(T_t(x))}}{e^{-V(x)}}
    \lim_{\eps\to 0}
        \frac{\vol(T_t(Q_\eps(x)))}{\vol(Q_\eps(x))}
=
    \frac{e^{-V(T_t(x))}}{e^{-V(x)}}\tilde \cJ_x(t) \, .
\end{equation} 
It turns out that the function $\tilde\cJ_x$ has a useful geometric interpretation. Let $\gamma\colon [0,1]\rightarrow M$ be a geodesic connecting $x$ with $T(x)$ (thus with velocity $\nabla \psi(x)$ in $t=0$) and let $\{e_1(t),\ldots,e_n(t)\}$ be a parallel orthonormal frame along $\gamma$ with $e_i(0)=e_i$. For any $i=1,\ldots,n$, consider the Jacobi vector field $J_i(t) := J_i(t,x)$ solving
\begin{equation}
\label{eq:jacobi}
    \ddot{J}(t)+\rie(\dot\gamma_x(t),J(t))\dot\gamma_x(t)=0 \, , 
\qquad 
    J(0)=e_i \, ,
        \quad 
    \dot J(0)=\nabla^2\psi(x)e_i \, ,
\end{equation}
where $\rie$ is defined in \eqref{eq:def_rie}. For $x\in M$ and $t\in [0,1]$, we define the $n \times n$ matrix  $\jacobi_x(t):=(J_1(t,x)|\ldots|J_n(t,x))$. The relation with $\tilde \cJ_x(t)$ is then given by the identity
\begin{equation}
    \tilde \cJ_x(t)=\det \jacobi_x(t) \, . 
\end{equation}
Due to our interest in the Brunn--Minkowski inequality, the natural quantity to look at is given by $\cD_x(t)=\cJ_x(t)^{1/N}$. Differentiating the determinant and using \eqref{eq:jacobi}, one can prove that $\cD_x(t)$ satisfies a Riccati--type equation involving the generalized Ricci tensor, given by
\begin{equation}    \label{eq:ODE_D(t)}
    -N\frac{\cD_x''(t)}{\cD_x(t)}=
        \Ricn_{\gamma(t)}(\dot\gamma(t), \dot\gamma(t))
            + \cE_x(t) \, ,
    \qquad
        \forall \, t\in [0,1]\, , \, x\in M \, . 
\end{equation} 
The remainder term $\cE_x(t)$
depends on the transport map and is defined as follows: set $U_x(t)=\partial_t\jacobi(t,x) \jacobi^{-1}(t,x)$, then we have an explicit expression given by
\begin{equation}
    \cE_x(t) =
    \left\|
        U_x(t)-\frac{\trace U_x(t)}{n}\,{\rm Id}
    \right\|_{{\rm HS}}^2
        +
    \frac{n}{N(N-n)}
    \left|
        \frac{N-n}{n}\trace U_x(t)
            +
        \g_{\gamma(t)} 
        \big( 
            \dot\gamma(t),\nabla V(\gamma(t))
        \big)
    \right|^2 , 
\end{equation}
where $\| \cdot \|_{\rm HS}$ denotes the Hilbert--Schmidt norm of a matrix.  
\begin{rmk}     \label{rmk:homogeneity_E}
    The term $\cE_x(t)$ enjoys good behavior under reparametrization of the curve $\gamma$. Indeed, if we see the map $\cE_x$ as a function of $t$ and $v:=\dot \gamma(0)$, then we have that 
\begin{equation}
    \cE_x(t) 
        = \cE_x(t,v) 
        =\|v\|^2 \cE_x
            \left(
                \| v\|t,\frac{ v}{\| v\|}
            \right)
        \, , \qquad \forall t\in [0,1] \, .
\end{equation} 
In particular, for any given $\lambda\in [0,1]$, if we consider the curve $\gamma^\lambda(s) := \gamma(\lambda s)$ for $s \in [0,1]$, then the corresponding functional $ \cE_{x,\lambda}$, obtained via the Jacobi vector fields along the reparametrized curve $\gamma^\lambda$, satisfies
\begin{align}   \label{eq:hat_cE}
    \cE_{x,\lambda}(s) 
        = \cE_x( \lambda s , \lambda v )
        = \lambda^2 \|v\|^2 \cE_{x}
            \left(
                \lambda^2\|v\|s,\frac{v}{\|v\|}
            \right)
        \, , \qquad \forall \, s\in [0,1] \, .
\end{align}
\end{rmk}

\begin{notation}
From now on, once a curve $\gamma$ is fixed, whenever we consider a reparametrization $\gamma^\lambda(s):=\gamma(\lambda s)$, all the quantities defined in this section, and associated with $\gamma^\lambda$, are denoted with a subscript $\lambda$. 
\end{notation}

\subsection{Choice of the Kantorovich potential}
    \label{sec:choice_kant}
In order to exploit the upper bound in \eqref{eq:upper_bound}, we would like to control the volume distortion along the direction of $v_0$, thus we choose a Kantorovich potential to suitably \emph{drive} the transport along that direction. In particular, fix $\psi\in C^3_c(M)$ such that
\begin{equation}    \label{eq:conditions_psi}
    \begin{cases}
        \nabla\psi(x_0)=v_0 \, , \\
        \nabla^2\psi(x_0)=\alpha_0 \,{\rm Id} \, , \\
        \Delta\psi(x_0)=n\,\alpha_0=-\frac{n}{N-n}\g(\nabla V(x_0),v_0) \, .
    \end{cases}
\end{equation}
In addition, define $\psi_\lambda=\lambda\psi$, for any $\lambda\in [0,1]$. Note that $\|\psi_\lambda\|_{C^2}\sim\lambda$, hence, for $\lambda$ sufficiently small, we can apply \cite[Theorem 13.5]{Villani:2009} and deduce that $\psi_\lambda$ is $\frac{d^2}{2}$-convex. As a consequence, the following map
\begin{equation}
\label{eq:transport_map}
    T^\lambda(x) := \exp_x( \nabla \psi_\lambda(x) )= \exp_x( \lambda \nabla \psi(x) )
\end{equation}
is optimal, by \cite[Theorem 8]{McCann:2001}. We also denote by $T_t^\lambda(x):=\exp_x(t\nabla \psi_\lambda(x))$, for any $x\in M$ and $t\in [0,1]$, the interpolating optimal map between the identity and $T_1^\lambda=T^\lambda$.

The unique geodesic joining $x_0$ and $T^1(x_0)$ is exactly given by $\gamma(t):=\exp_{x_0}(t v_0)$ and, by definition, $U_{x_0}(0)=\nabla^2\psi(x_0)$. Thus, we have that
$
    \cE_{x_0}(0)=0
$.
This choice of $\psi$ is closely related to the Bochner inequality and its equivalence to lower bounds on the modified Ricci tensor, see e.g. \cite[Theorem~14.8]{Villani:2009}.
In particular, since $\cE_{x_0}(\cdot)$ is a continuous function on $[0,1]$, there exists $\bar\lambda\in (0,1]$ such that 
\begin{equation}
\label{eq:refined_ineq}
    \cE_{x_0}(t)\leq \delta , \qquad\forall t\in[0,\bar\lambda] \, .
\end{equation}
Now, for any $\lambda\leq\bar\lambda$ we reparametrize $\gamma$ on the interval $[0,\lambda]$ obtaining $\gamma^\lambda$ as in \eqref{eq:geodesic_v0_small}. Then, according to the notation introduced in the previous section, denoting $ \cD_{x_0,\lambda}$ the associated (power of the) Jacobian determinant and with $\cE_{x_0,\lambda}$ the remainder as in Remark~\ref{rmk:homogeneity_E}, from \eqref{eq:upper_bound} and \eqref{eq:hat_cE}, with $v=\lambda v_0$ we obtain that
\begin{equation}    
\label{eq:ODI_hatD0}
    -N\frac{\cD_{x_0,\lambda}''(s)}{\cD_{x_0,\lambda}(s)}=
        \Ricn_{\gamma^\lambda(s)}
            (\dot{\gamma}^\lambda(s) , \dot{\gamma}^\lambda(s) )
                +
            \cE_{x_0,\lambda}(s)
        < (K-\delta) \lambda^2
            \, ,\qquad \forall s \in (0,1) \, .
\end{equation}
The next step is to provide a suitable one-dimensional comparison result for $ \cD_{x_0,\lambda}$, as a solution of the ordinary differential inequality \eqref{eq:ODI_hatD0}.

\subsection{One-dimensional comparison}
The following lemma is in the same spirit of
\cite[Theorem~14.28]{Villani:2009}, although concerning the reverse inequality. 
\begin{lem} \label{lem:1d_comparison}
Let $\Lambda<\pi^2$ and $f\in C([0,1])\cap C^2(0,1)$, with $f\geq 0$. Then the following are equivalent:
\begin{enumerate}
\item $\ddot f+\Lambda f\geq 0$ in $(0,1)$;
\item For all $t,s_0,s_1\in [0,1]$, 
\begin{equation}    \label{eq:comparison_II}
    f((1-t)s_0+t s_1)\leq \sigma^{(1-t)}(|s_0-s_1|)f(s_0)+\sigma^{(t)}(|s_0-s_1|)f(s_1) \, ,
\end{equation}
where $\sigma^{(t)}(\cdot)=\sigma^{(t)}_{\Lambda,1}(\cdot)$, according to the notation in Definition~\ref{def:distortion}.
\end{enumerate}
\end{lem}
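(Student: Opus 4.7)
The plan is to prove (1) $\Rightarrow$ (2) via a one-dimensional Sturm comparison on $[0,1]$, and (2) $\Rightarrow$ (1) via a second-order Taylor expansion at interior points. The key observation is that, for any $L \geq 0$, the functions $t \mapsto \sigma^{(t)}_{\Lambda,1}(L)$ and $t \mapsto \sigma^{(1-t)}_{\Lambda,1}(L)$ are, by direct computation from the explicit formulas of Definition~\ref{def:distortion}, the two solutions of the linear ODE $\ddot g + \Lambda L^2 g = 0$ on $(0,1)$ which vanish respectively at $t = 1$ and $t = 0$ and equal $1$ at the opposite endpoint.

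For (1) $\Rightarrow$ (2), I fix $s_0, s_1 \in [0,1]$ and set $L := |s_1 - s_0|$; since $L \leq 1$ and $\Lambda < \pi^2$, we have $\Lambda L^2 < \pi^2$. Introducing the reparametrization $\phi(t) := f((1-t)s_0 + t s_1)$, the chain rule gives $\ddot \phi(t) = L^2 f''((1-t)s_0 + t s_1)$, so hypothesis (1) yields $\ddot \phi + \Lambda L^2 \phi \geq 0$ on $(0,1)$. Comparing $\phi$ with $\psi(t) := \sigma^{(1-t)}_{\Lambda,1}(L) f(s_0) + \sigma^{(t)}_{\Lambda,1}(L) f(s_1)$, which by the key observation solves $\ddot \psi + \Lambda L^2 \psi = 0$ with the same boundary values, the difference $w := \psi - \phi$ lies in $C([0,1]) \cap C^2(0,1)$, vanishes at $t = 0, 1$, and satisfies $\ddot w + \Lambda L^2 w \leq 0$ on $(0,1)$. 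The desired inequality $w \geq 0$, which is exactly \eqref{eq:comparison_II}, then follows from a standard one-dimensional Dirichlet maximum principle, valid precisely because the principal Dirichlet eigenvalue of $-d^2/dt^2$ on $[0,1]$ is $\pi^2 > \Lambda L^2$.

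For the converse (2) $\Rightarrow$ (1), I fix $s \in (0,1)$, choose $\eps > 0$ small enough that $[s - \eps, s + \eps] \subset (0,1)$, and apply \eqref{eq:comparison_II} with $s_0 = s - \eps$, $s_1 = s + \eps$, $t = 1/2$. A short Taylor expansion of the explicit formulas yields $\sigma^{(1/2)}_{\Lambda,1}(2\eps) = \tfrac{1}{2} + \tfrac{\Lambda}{4} \eps^2 + O(\eps^4)$, uniformly in the three regimes $\Lambda > 0$, $\Lambda = 0$, $\Lambda < 0$; combined with $f(s - \eps) + f(s + \eps) = 2 f(s) + \eps^2 \ddot f(s) + O(\eps^3)$, substitution into the inequality isolates $0 \leq \tfrac{\eps^2}{2}(\ddot f(s) + \Lambda f(s)) + O(\eps^3)$, and dividing by $\eps^2$ before letting $\eps \to 0$ gives (1).

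I expect the only genuinely delicate point to be the maximum principle in (1) $\Rightarrow$ (2), which is exactly where the hypothesis $\Lambda < \pi^2$ enters: at $\Lambda L^2 = \pi^2$ the function $\sin(\pi t)$ becomes a nontrivial Dirichlet solution, the maximum principle breaks down, and one can construct explicit counterexamples to \eqref{eq:comparison_II}. Everything else reduces to routine manipulations of the one-dimensional trigonometric, linear, or hyperbolic kernels encoded in Definition~\ref{def:distortion}.
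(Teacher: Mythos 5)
Your proposal is correct and follows essentially the same route as the paper: for $(1)\Rightarrow(2)$ you reduce to the Sturm comparison/generalized maximum principle for the operator $\tfrac{d^2}{dt^2}+\Lambda L^2$ below its first Dirichlet eigenvalue $\pi^2$, and for $(2)\Rightarrow(1)$ you Taylor-expand $\sigma^{(1/2)}$ and $f$ around the midpoint; the expansions and the conclusion match the paper's verbatim. The only difference is that you invoke the one-dimensional Dirichlet maximum principle as a known black box, whereas the paper proves it inline by studying the ratio $f/g$ together with a small strictly supersolution perturbation $f_a=f+aw$ to guarantee interior positivity of the comparison function — a detail worth keeping in mind if you ever need the argument to be self-contained, since the naive "look at an interior minimum" argument does not close without the auxiliary positive solution.
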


\begin{proof}
$(1)\Rightarrow (2)$. \  
Let  $\Lambda \neq 0$, the case $\Lambda =0$ being trivial. Set $f(t):=f((1-t)s_0+t s_1)$, then by assumption it satisfies
\begin{equation}
    \label{eq:additional_assumption}
    f''(t)+ \bar \Lambda f(t)\geq 0,\qquad\forall\,t\in [0,1]\, , \quad \bar \Lambda := \Lambda |s_1-s_0|^2 \, .
\end{equation}
Let $g$ be the right-hand side of \eqref{eq:comparison_II}, as a function of $t$. In particular, $g$ solves the same equation of $f$ with an equal sign, i.e. $g'' + \bar \Lambda g =0$, and $g(0)=f(0)$, $g(1) = f(1)$. Let $w:[0,1] \to \R_+$ be any solution to the problem
\begin{align}
    w''(t) + \bar \Lambda w(t) > 0
        \tand 
    w > 0 \quad \text{on} \quad (0,1) \, .
\end{align}
If $\Lambda>0$, then we can choose $w\equiv 1$; if $\Lambda<0$, we can choose e.g. $w(t) = \exp(t \sqrt{-\bar \Lambda+1})$. Then for every $a \in \R_+$, we can define $f_a:= f + a w$ and let $g_a$ be the solution to $g_a'' + \bar \Lambda g_a = 0$ with $g_a(0) = f_a(0)$, $g_a(1)=f_a(1)$. We note that $f_a > 0$ on $(0,1)$ by construction and $f_a \to f$, $g_a \to g$ uniformly, as $a \to 0$. Moreover, we also have that
\begin{align}   \label{eq:strict}
    f_a''(t)+ \bar \Lambda f_a(t) > 0,\qquad\forall\,t\in [0,1] \, .
\end{align} 
Therefore, without loss of generality, we can assume $f,g>0$ and $f/g \in C^2$ in $(0,1)$ and \eqref{eq:strict} is satisfied with $f$ instead of $f_a$. In order to prove \eqref{eq:comparison_II}, we shall prove that $f/g$ attains its maximum in $\{0,1\}$. By contradiction, let $t_0 \in (0,1)$ be a maximum for $f/g$, hence $(f/g)'(t_0) =0$ and $(f/g)''(t_0) \leq 0$. An elementary computation shows that
\begin{align}
    \left( \frac{f}{g} \right)'' = 
    \frac
        {f''+\bar \Lambda f}{g}
    - \frac{f}{g^2}
        (g'' + \bar \Lambda g)
    -2 \frac{g'}g
        \left(  \frac{f}g  \right)'
     .
\end{align}
Evaluating at $t=t_0$, since $g>0$, and using the equation solved by $g$, we find that $f''(t_0)+\bar \Lambda f(t_0) \leq 0$, which is a contradiction.

\smallskip
\noindent 
$(2)\Rightarrow (1)$. \ 
Consider the Taylor expansion of $\sigma^{(\frac12)}(\cdot)$ at $\theta=0$, namely
\begin{equation}
\label{eq:taylor_exp_sigma}
\sigma^{\left(\frac{1}{2}\right)}(\theta)=\frac{1}{2}\left(1+\frac{\Lambda}{8}\theta^2\right)+o(\theta^3) \, .
\end{equation}
Analogously, we have the following Taylor expansion for $f$,
\begin{equation}
\label{eq:taylor_exp_f}
\frac{f(s_0)+f(s_1)}{2}=f\left(\frac{s_0+s_1}{2}\right)+\frac{1}{2}\ddot f\left(\frac{s_0+s_1}{2}\right)\left|\frac{s_1-s_0}{2}\right|^2+o(|s_0-s_1|^2) \, .
\end{equation}
Now, fix $t\in (0,1)$ and let $s_0,s_1\to t$ in such a way $t=\frac{s_0+s_1}{2}$. From \eqref{eq:taylor_exp_f}, we obtain
\begin{equation}
\frac{f(s_0)+f(s_1)}{2}=f(t)+\frac{1}{8}\ddot f(t)\left|s_1-s_0\right|^2+o(|s_0-s_1|^2) \, .
\end{equation}
Moreover, evaluating \eqref{eq:taylor_exp_sigma} at $\theta=|s_0-s_1|$, we rewrite 
\begin{equation}
\sigma^{\left(\frac{1}{2}\right)}(|s_0-s_1|)=\frac{1}{2}\left(1+\frac{\Lambda}{8}|s_0-s_1|^2\right)+o(|s_0-s_1|^3) \, .
\end{equation} 
Finally, putting all together we obtain that
\begin{equation}
\sigma^{\left(\frac{1}{2}\right)}(|s_0-s_1|)\left(f(s_0)+f(s_1)\right)-f(t)=\frac{1}{8}|s_0-s_1|^2\left(\Lambda f(t)+\ddot f(t)+o(1)\right)\geq 0  \,  .
\end{equation}
Taking the limit as $s_0,s_1\to t$ leads to the conclusion.
\end{proof} 

As a consequence of Lemma~\ref{lem:1d_comparison}, we prove the following upper bound. Recall the definition of $\bar\lambda\in (0,1]$ in \eqref{eq:refined_ineq}.

\begin{prop}    \label{prop:upper_bound_hatD}
     There exist $\lambda_1 \in (0,\bar\lambda)$ and $c>0$ not depending on $\bar\lambda$, such that, whenever $\lambda \leq \lambda_1$, we have that
    \begin{align}\label{eq:cpiccolo}
        \cD_{x_0,\lambda}\bigg( \frac12 \bigg) \leq \tau_{K,N}^{\left( \frac12 \right)}(\lambda)
            \left(
                \cD_{x_0,\lambda}(0) +  \cD_{x_0,\lambda}(1)  
            \right)    
             - c \lambda^2 \, .
    \end{align}
\end{prop}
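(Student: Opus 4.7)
The plan is to invoke the one-dimensional comparison principle of Lemma~\ref{lem:1d_comparison} applied to $f = \cD_{x_0,\lambda}$ on $[0,1]$ with parameter $\Lambda := (K-\delta)\lambda^2/N$, and then to quantitatively compare the resulting coefficient $\sigma^{(1/2)}_{\Lambda,1}(1)$ with the target $\tau^{(1/2)}_{K,N}(\lambda)$ by a Taylor expansion at $\lambda = 0$. The $-c\lambda^2$ improvement will come directly from the $\delta$-gap in the Ricci bound propagated by \eqref{eq:ODI_hatD0}.

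First I would verify the hypotheses of the lemma: the inequality \eqref{eq:ODI_hatD0} rewrites as $\cD_{x_0,\lambda}''(s) + \Lambda\, \cD_{x_0,\lambda}(s) > 0$ on $(0,1)$, while $\cD_{x_0,\lambda}$ is continuous on $[0,1]$, smooth on $(0,1)$, and strictly positive (since $T^\lambda$ is a smooth diffeomorphism for small $\lambda$). Moreover $\Lambda < \pi^2$ once $\lambda$ is sufficiently small. Applying Lemma~\ref{lem:1d_comparison} with $s_0 = 0$, $s_1 = 1$, $t = 1/2$ then yields
\begin{equation}
    \cD_{x_0,\lambda}\bigl(\tfrac{1}{2}\bigr) \leq \sigma^{(1/2)}_{\Lambda,1}(1)\bigl(\cD_{x_0,\lambda}(0) + \cD_{x_0,\lambda}(1)\bigr).
\end{equation}

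The key step is a Taylor expansion of both coefficients as $\lambda \to 0$. Using \eqref{eq:taylor_exp_sigma} (or a direct computation in the three cases of Definition~\ref{def:distortion}) one finds
\begin{equation}
    \sigma^{(1/2)}_{\Lambda,1}(1) = \tfrac{1}{2} + \tfrac{\Lambda}{16} + O(\Lambda^2) = \tfrac{1}{2} + \tfrac{(K-\delta)\lambda^2}{16N} + O(\lambda^4),
\end{equation}
while from the definition $\tau^{(1/2)}_{K,N}(\lambda) = (1/2)^{1/N}\,\sigma^{(1/2)}_{K,N-1}(\lambda)^{1-1/N}$, combined with $\sigma^{(1/2)}_{K,N-1}(\lambda) = 1/2 + K\lambda^2/(16(N-1)) + O(\lambda^4)$ and the binomial expansion of $x \mapsto x^{1-1/N}$ about $x = 1/2$, one computes
\begin{equation}
    \tau^{(1/2)}_{K,N}(\lambda) = \tfrac{1}{2} + \tfrac{K\lambda^2}{16N} + O(\lambda^4).
\end{equation}
Crucially, the $K$-contributions cancel upon subtraction, leaving
\begin{equation}
    \sigma^{(1/2)}_{\Lambda,1}(1) - \tau^{(1/2)}_{K,N}(\lambda) = -\tfrac{\delta\lambda^2}{16N} + O(\lambda^4).
\end{equation}

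To conclude, I would exploit the normalization $\cD_{x_0,\lambda}(0) = \cJ_{x_0}(0)^{1/N} = 1$, which forces $\cD_{x_0,\lambda}(0) + \cD_{x_0,\lambda}(1) \geq 1$. Choosing $\lambda_1 \in (0,\bar\lambda)$ small enough that the quartic remainder above is dominated by half of the leading quadratic term — a threshold that depends only on the structural constants $K$, $N$, $\delta$ — one obtains $\sigma^{(1/2)}_{\Lambda,1}(1) \leq \tau^{(1/2)}_{K,N}(\lambda) - \delta\lambda^2/(32N)$ for every $\lambda \leq \lambda_1$. Substituting into the comparison inequality and using the lower bound on $\cD_{x_0,\lambda}(0) + \cD_{x_0,\lambda}(1)$ yields \eqref{eq:cpiccolo} with $c := \delta/(32N) > 0$, which manifestly does not depend on $\bar\lambda$. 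I do not anticipate any genuine obstacle in this step: the only subtle point is verifying that the Taylor remainders are uniform in the relevant parameters, which follows because both expansions depend solely on the fixed constants $K$, $N$ and $\delta$; the entire argument hinges on the cancellation of the $K\lambda^2/(16N)$ leading terms, which isolates the negative $\delta$-correction as the first nontrivial contribution.
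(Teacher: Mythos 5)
Your proof is correct and follows essentially the same route as the paper: apply Lemma~\ref{lem:1d_comparison} to $\cD_{x_0,\lambda}$ with $\Lambda = (K-\delta)\lambda^2/N$, then Taylor-expand the resulting coefficient against $\tau^{(1/2)}_{K,N}(\lambda)$ to extract a $-\delta\lambda^2/(16N)$ gap, and finish using $\cD_{x_0,\lambda}(0)+\cD_{x_0,\lambda}(1)\geq 1$. The only cosmetic difference is that you expand $\sigma^{(1/2)}_{\Lambda,1}(1)$ directly while the paper first passes through $\sigma^{(t)}_{K-\delta,N}\leq\tau^{(t)}_{K-\delta,N}$ and then expands $\tau$; since the two coefficients agree to second order in $\lambda$, both yield the same leading term (and your normalization $\cD_{x_0,\lambda}(0)=1$ is the right one — the paper's ``$\cD_{x_0,\lambda}(1)=1$'' is a typo).
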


\begin{proof}
Setting $f(t) := \cD_{x_0,\lambda}(t)$ and $\Lambda:= N^{-1} (K-\delta) \lambda^2$, we can choose $\rho \in \R_+$ small enough such that, for any $\lambda \leq \rho$, conditions \eqref{eq:geodesic_v0_small} and \eqref{eq:upper_bound} are satisfied and $\Lambda < \pi^2$. The estimate obtained in \eqref{eq:ODI_hatD0} shows that the hypothesis of Lemma~\ref{lem:1d_comparison} is satisfied. Therefore, using that $\sigma_{K-\delta,N}^{(s)}\leq \tau_{K-\delta,N}^{(s)}$, for any $s\in[0,1]$, we have 
 \begin{align}\label{eq:ineqDconv}
        \cD_{x_0,\lambda}(t) \leq \tau_{K-\delta,N}^{(1-t)}(\lambda)  \cD_{x_0,\lambda}(0) + \tau_{K-\delta,N}^{(t)}(\lambda)  \cD_{x_0,\lambda}(1)  
            \, , \qquad  \forall t \in [0,1] \, .
    \end{align}
Recalling Definition~\ref{def:distortion}, we consider the Taylor expansion of $\tau_{K,N}^{(t)}(\lambda)$, as $\lambda\to 0$, obtaining 
\begin{align}
    \tau_{K,N}^{(t)}(\lambda) = t
    \left(
        1 + (1-t^2)  \frac{K}{6N} \lambda^2
    \right)
        + o(\lambda^2) 
    \, , \qquad  
        \forall t \in [0,1] \, .
\end{align}
Then, using this expansion, we can deduce that, as $\lambda\to 0$,
\begin{align}
\label{eq:exp_difference_tau}
    \tau_{K-\delta,N}^{(t)}(\lambda) = \tau_{K,N}^{(t)}(\lambda) - t (1-t^2) \frac{\delta}{6N}\lambda^2  + o(\lambda^2) \,,\qquad  
        \forall t \in [0,1] .
\end{align}
Combining \eqref{eq:ineqDconv} and \eqref{eq:exp_difference_tau} for $t=1/2$, and noting that $\cD_{x_0,\lambda}(0) +  \cD_{x_0,\lambda}(1)\geq \cD_{x_0,\lambda}(1)=1$, we obtain 
\begin{align}
        \cD_{x_0,\lambda}\left(\frac{1}{2}\right) &\leq \tau_{K,N}^{(\frac12)}(\lambda)\left( \cD_{x_0,\lambda}(0) +  \cD_{x_0,\lambda}(1)\right)-\frac{\delta}{16N}  \lambda^2 + o (\lambda^2)\\
        &\leq \tau_{K,N}^{(\frac12)}(\lambda)\left( \cD_{x_0,\lambda}(0) +  \cD_{x_0,\lambda}(1)\right)-c  \lambda^2 ,
    \end{align}
where the last inequality holds definitely as $\lambda \to 0$, for a suitable positive constant $c>0$, independent on $\lambda$. This concludes the proof.
\end{proof}
%
%
Proposition~\ref{prop:upper_bound_hatD} is a step forward towards the contradiction of the Brunn--Minkowski inequality. Indeed, on one side, $\cD_{x_0,\lambda}$ measures the infinitesimal volume distortion given by the transport map $T^\lambda$ around the geodesic $\gamma^\lambda$. On the other side, the inequality \eqref{eq:cpiccolo} goes in the opposite direction with respect to the Brunn--Minkowski inequality. The next step is to find an initial set $A$, as a suitable infinitesimal cube generated by an orthonormal basis $\{e_1, \dots, e_n\}$, such that the distortion steered by $\cD_{x_0,\lambda}$ allows to estimate the mass of the midpoints between $A$ and $T^\lambda(A)$.


\subsection{The choice of the basis}
\label{sec:choice_basis}
For a fixed point $y \in M$, consider the squared distance function $\di_y^2(x) := \di_\g^2(x,y)$, for $x \in M$. If $\cut(y) \subset M$ denotes the cut-locus set of $y$ (i.e. the set of points $x \in M$ where $t \mapsto \exp_y(tx)$ loses minimality), then for $x \in M \setminus \cut(y)$, the gradient (or Levi-Civita covariant derivative) of $\di_y^2$ is given by 
\begin{align}   \label{eq:nabla_d2}
    \nabla \di_y^2(x) = - 2 \log_x(y) \, ,
\end{align}
where $\log_x = \exp_x^{-1}$.
The Hessian of the squared distance is defined as
\begin{equation}
     \nabla^2 \di_y^2(x_0)(v,w) = V(W \di_y^2)- (\nabla_VW)\di_y^2 \, ,\qquad v,w\in T_{x_0}M\,,
\end{equation}
where $V$ and $W$ are any extension to a vector field of $v$ and $w$, respectively. In particular, as a quadratic form, we have
\begin{equation}
    \nabla^2 \di_y^2(x_0)(v,v) = \frac{d^2}{dt^2}\bigg|_{t=0}  \di_y^2(\exp_{x_0}(tv)) ,\qquad v\in T_{x_0}M\,.
\end{equation}
Thus, using \cite[Theorem~3.8]{Loeper:2009}, we deduce (see also e.g. \cite{Pennec:2017}),
\begin{align}   \label{eq:hessian_squared_distance}
    \frac12 \nabla^2 \di_y^2(x_0) = \id - \frac13R_y(x_0) + O \big( \di_\g^4(x_0,y) \big) \, ,
\end{align}
where $R_y(x_0)$ denotes the symmetric $(0,2)$-tensor given by
\begin{align}
    (v,w) \in T_{x_0}M \times T_{x_0}M
        \mapsto
    \g_{x_0}
    \big(
        \rie
        \left(
            \log_{x_0}(y), v
        \right)
        \log_{x_0}(y)
    , 
        w 
    \big) \, .
\end{align}
%
%
Notice that \eqref{eq:hessian_squared_distance} is a statement between bilinear forms on the finite-dimensional vector space $T_{x_0}M$, thus the norm of the tensor 
\begin{align} 
    \frac12 \nabla^2 \di_y^2(x_0) - \id + \frac13R_y(x_0) \, ,
\end{align}
goes to $0$ as $y\to x_0$ (with order $4$), for any choice of operator norm on $T_{x_0}M\times T_{x_0}M$. From the symmetry of the Riemann tensor, we know that the tensor $R_y(x_0)$ is symmetric. 
Therefore, for every reference frame of $T_{x_0}M$, the matrix representation of $R_y(x_0)$ is self-adjoint, hence it is diagonalizable with orthogonal eigenspaces. We choose $\{e_1, \dots, e_n \} \subset T_{x_0}M$ to be an orthonormal basis of eigenvectors of $R_y(x_0)$.

\begin{rmk}
Observe that for $v\in T_{x_0}M$, such that $\|v\|=1$ and $\langle v , \log_{x_0}(y) \rangle = 0$, $R_y(x_0)(v,v)$ is the sectional curvature  of the plane generated by $v$ and $\log_{x_0}(y)$. Thus, the choice of $Q_\eps(x_0)$ encodes information of the sectional curvature of $M$ at $x_0$. This could present a possible issue for extensions of this technique to non-smooth settings.
\end{rmk}

We claim that, with this particular choice, the set of $\frac12$-midpoints between $Q_\eps(x_0)$ and $T^\lambda(Q_\eps(x_0))$, where $T^\lambda$ is defined in \eqref{eq:transport_map}, is quantitatively close (in measure) to $T_{\frac12}^\lambda(Q_\eps(x_0))$, for $\lambda$ sufficiently small. Recall that the $t$-midpoints between two sets $A,B$ are given by
\begin{align}
    M_t(A,B):=
    \left\{
        y=\exp_x(tv) 
            \suchthat 
        x \in A \, , \ \exp_x v \in B 
    \right\} \, .
\end{align}
\begin{prop}[Control for the measure of the midpoints]
    \label{prop:control}
Let $x_0 \in M$, $v_0 \in T_{x_0}M$ be a point satisfying \eqref{eq:ric_absurd}. Recall the definition of $T^\lambda$ in \eqref{eq:transport_map} and set $y_0 := T^\lambda(x_0)$. Let $\{e_1, \dots, e_n \}$ to be an orthonormal basis of eigenvectors of $R_{y_0}(x_0)$ and for $\eps \in (0,1)$, let $Q_\eps(x_0)$ be the corresponding cube, as described in Section~\ref{sec:infinitesimal}. Then there exists $\lambda_2 \in (0,\bar\lambda)$, where $\bar\lambda$ is defined in \eqref{eq:refined_ineq}, and $\bar\eps\in (0,1)$, such that, whenever $\lambda \leq \lambda_2$ and $\eps\leq\bar\eps$, we have that 
\begin{align}
    \wei
    \left(  
        M_{\frac12} \big( Q_\eps(x_0) , T^\lambda(Q_\eps(x_0)) \big) 
    \right)^\frac1N
        \leq 
    \big( 1 + C \big( \lambda^4 + \eps \big) \big)
        \wei 
        \left( 
            T_{\frac12}^\lambda \big( Q_\eps(x_0) \big) 
        \right)^\frac1N \, ,
\end{align}
where $C \in \R_+$ is a constant that does not depend on $\lambda$ and $\eps$.
\end{prop}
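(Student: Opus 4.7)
My strategy is to realize the set $M_{\frac12}(Q_\eps(x_0), T^\lambda(Q_\eps(x_0)))$ as the image of the smooth map
\[
    \Phi^\lambda : Q_\eps(x_0) \times Q_\eps(x_0) \to M \,, \qquad
    \Phi^\lambda(x,x') := \exp_x \bigl( \tfrac12 \log_x T^\lambda(x') \bigr) \,,
\]
which is well-defined for $\lambda$ and $\eps$ small enough. Since $\Phi^\lambda(x,x) = T^\lambda_{\frac12}(x)$, the restriction to the diagonal parametrizes $T^\lambda_{\frac12}(Q_\eps(x_0))$. The plan is to exhibit a slightly enlarged cube $\widetilde Q := Q_{\eps(1+C(\lambda^4+\eps))}(x_0)$ with $\Phi^\lambda(Q_\eps \times Q_\eps) \subseteq T^\lambda_{\frac12}(\widetilde Q)$; the desired inequality then follows by taking $\wei$-measures and extracting the $1/N$-th power, using $\wei(\widetilde Q)/\wei(Q_\eps(x_0)) \to (1+C(\lambda^4+\eps))^n$ as $\eps \to 0$.

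For $\lambda$ small, $T^\lambda_{\frac12}$ is a diffeomorphism on a neighborhood of $x_0$; hence for each $(x,x') \in Q_\eps \times Q_\eps$ there is a unique $z = z(x,x') \in M$ with $T^\lambda_{\frac12}(z) = \Phi^\lambda(x,x')$. Working in normal coordinates at $x_0$ aligned with the frame $\{e_1,\ldots,e_n\}$, a first-order expansion using $\nabla\psi(x_0)=v_0$, $\nabla^2\psi(x_0)=\alpha_0\,\id$, and the flat approximation of the metric yields
\[
    z - x_0 \approx \tfrac{1}{2+\lambda\alpha_0}(x-x_0) + \tfrac{1+\lambda\alpha_0}{2+\lambda\alpha_0}(x'-x_0) \,,
\]
a convex combination of $x - x_0$ and $x' - x_0$, hence lying in the original $Q_\eps(x_0)$. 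At this level, $\Phi^\lambda(Q_\eps \times Q_\eps)$ would already be contained in $T^\lambda_{\frac12}(Q_\eps(x_0))$.

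The deviations of $z$ from the above prediction come from three sources: (a) the non-affine part of $\nabla \psi$, giving $O(\lambda \eps^2)$; (b) the Riemannian correction to $\Phi^\lambda$; and (c) the analogous correction to $T^\lambda_{\frac12}$. For (b) and (c), the essential input is the Hessian expansion \eqref{eq:hessian_squared_distance}: the leading curvature term involves $R_y(x_0)$ with size $O(\lambda^2)$, and the remainder is $O(\di_\g^4(x_0,y)) = O(\lambda^4)$, uniformly in the endpoint $y$ (which ranges within $T^\lambda(Q_\eps)$, at distance $O(\lambda)$ from $x_0$). The central fact is that the $O(\lambda^2)$ curvature contributions to $\Phi^\lambda$ and $T^\lambda_{\frac12}$ have the same structure, since both are built from geodesics of length $O(\lambda)$ emanating from $x_0$; these leading corrections therefore cancel when solving for $z$ implicitly, leaving only the $O(\lambda^4)$ residual from \eqref{eq:hessian_squared_distance} together with the $O(\lambda \eps^2)$ from~(a).

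The role of the eigenbasis of $R_{y_0}(x_0)$ becomes visible at this point: since $R_{y_0}(x_0)$ is symmetric, diagonalizing it in the chosen normal coordinates ensures that the residual curvature correction preserves the coordinate cube structure, i.e.\ the deviations of $z$ from the convex prediction are coordinate-wise bounded. Consequently $z \in \widetilde Q$, proving the inclusion. The main obstacle is to rigorously exhibit the cancellation of the $O(\lambda^2)$ curvature terms between $\Phi^\lambda$ and $T^\lambda_{\frac12}$: this requires a careful Jacobi-field expansion of both maps in the spirit of Section~\ref{sec:infinitesimal}, and it is precisely the diagonalization via the eigenbasis of $R_{y_0}(x_0)$ that allows the $O(\lambda^4)$ residual to be absorbed into a coordinate-wise enlargement of $Q_\eps$, rather than necessitating a larger off-axis enlargement that would spoil the bound.
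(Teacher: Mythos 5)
Your overall strategy — realize $M_{\frac12}$ as the image of $\Phi^\lambda(x,x')=\exp_x(\tfrac12\log_x T^\lambda(x'))$, solve $T^\lambda_{\frac12}(z)=\Phi^\lambda(x,x')$ for $z$, and show $z$ lies in a slightly enlarged cube — is essentially the same as the paper's (which linearizes $F(x,y)=\exp_x(\tfrac12\log_x y)$ and $T^\lambda_{\frac12}$ about $x_0$ and solves a linear problem), and your flat first-order formula for $z$ is correct. However, your explanation of the key mechanism contains a genuine error: the $O(\lambda^2)$ curvature contributions to $\de_{x_0}F(\cdot,y_0)$ and $\de_{x_0}T^\lambda_{\frac12}$ do \emph{not} cancel. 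Computing them via \eqref{eq:hessian_squared_distance} and Lemma~\ref{lem:gradients}, one finds (in the notation of the paper's proof)
\begin{align}
    M_1 = \id + \tfrac16 R_{y_0}(x_0)\,, \quad M_2 = (1+\lambda\alpha_0)\id - \tfrac13 R_{y_0}(x_0)\,, \quad M_3 = \big(1+\tfrac{\lambda\alpha_0}{2}\big)\id - \tfrac1{12} R_{y_0}(x_0)\,,
\end{align}
so each of $M_1,M_2,M_3$ carries a distinct nonzero $O(\lambda^2)$ curvature term. What saves the argument is not cancellation but the exact algebraic identity $M_3 = \tfrac12(M_1+M_2)$, which persists \emph{including} the $O(\lambda^2)$ curvature corrections, combined with the fact that $M_1,M_2,M_3$ are simultaneously diagonalized in the eigenbasis of $R_{y_0}(x_0)$. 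In that basis, $z-x_0$ is a coordinate-wise convex combination of $x-x_0$ and $x'-x_0$ with positive weights summing to $1$; the $O(\lambda^2)$ corrections do shift $z$, but not out of the cube. Relatedly, you assign the role of the eigenbasis to absorbing the $O(\lambda^4)$ residual, which is backwards: the $O(\lambda^4)$ error is a generic tensor unaffected by the choice of basis and is simply absorbed by the $O(\eps+\lambda^4)$ uniform enlargement, whereas the eigenbasis is essential to handle the $O(\lambda^2)$ terms, which in a generic basis would produce off-diagonal mixing and could push $z$ outside $Q_\eps$. If you try to carry out the "cancellation" plan, you will not find the $O(\lambda^2)$ terms vanishing, and without the explicit coefficients $\tfrac16, -\tfrac13, -\tfrac1{12}$ and the identity $M_3=\tfrac12(M_1+M_2)$ you cannot justify the cube inclusion. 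Finally, one smaller gap: you quote $\wei(\widetilde Q)/\wei(Q_\eps)$, but what is needed is $\wei(T^\lambda_{\frac12}(\widetilde Q))/\wei(T^\lambda_{\frac12}(Q_\eps))$, which requires a Jacobian and density comparison along $T^\lambda_{\frac12}$ (the paper does this via $\det(\de_x T^\lambda_{\frac12})=\det M_3(1+O(\lambda^4+\eps))$ and a Taylor expansion of the weight); this step is standard but not automatic.
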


\begin{rmk}
Recalling Remark \ref{rmk:measurability_Mt}, since $\exp_x$ is a local diffeomorphism around $x$, $M_t(Q_\eps(x_0) , T^\lambda(Q_\eps(x_0)) )$ is measurable for any $t\in[0,1]$.
\end{rmk}
In order to prove this proposition, we need the following preliminary result.
\begin{lem}     \label{lem:gradients}
    Let $\psi \in C^3(M)$ such that $T(x):= \exp_x(\nabla \psi(x))$ is optimal and define $T_t(x):= \exp_x(t \nabla \psi(x))$. Then,
    \begin{enumerate}
        \item We have that $\nabla \psi(x)= -\nabla \di_{y}^2(x)/2$, where $y = T(x)$.
        \item
        For every $x \in M$, and for every $t \in [0,1]$, in normal coordinates centered at $x$, we have that
    \begin{align}
        \emph{d}_x T_t = \nabla^2
        \bigg(
            \frac12 \di_{z_t}^2+ t \psi
        \bigg)(x)
            \, , \quad z_t := T_t(x) \, .
    \end{align}
    \end{enumerate}
\end{lem}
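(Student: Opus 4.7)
This is an immediate consequence of \eqref{eq:nabla_d2}. Since $\psi\in C^3_c(M)$ has sufficiently small $C^2$-norm (the regime invoked in Section~\ref{sec:choice_kant}), the point $y=T(x)=\exp_x(\nabla\psi(x))$ lies outside the cut locus of $x$ and $\exp_x$ is a local diffeomorphism at $\nabla\psi(x)$, hence $\log_x(y)=\nabla\psi(x)$. Substituting into $\nabla\di_y^2(x)=-2\log_x(y)$ yields $\nabla\psi(x)=-\nabla\di_y^2(x)/2$.

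\textbf{Part (2).} The idea is to apply part (1) pointwise. For every $x'$ in a small neighborhood of $x$, running the same argument with $t\psi$ in place of $\psi$ produces the vector-field identity
\begin{equation*}
    t\,\nabla\psi(x') = \log_{x'}\bigl(T_t(x')\bigr)\, ,
\end{equation*}
which we then differentiate at $x'=x$ in the chosen normal coordinates centered at $x$. Since the Christoffel symbols of $\g$ vanish at the origin of normal coordinates, the covariant derivative of the left-hand side at $x$ coincides with the ordinary Euclidean Hessian, contributing the matrix $t\,\nabla^2\psi(x)$. For the right-hand side, view $(x',y)\mapsto\log_{x'}(y)$ as a smooth $\R^n$-valued map near $(x,z_t)$ and apply the chain rule:
\begin{equation*}
    d_{x'}\bigl[\log_{x'}(T_t(x'))\bigr]\big|_{x'=x}
        = \partial_{x'}\log_{x'}(z_t)\big|_{x'=x}
            + d_y\log_x(y)\big|_{y=z_t}\,\circ\, d_xT_t \, .
\end{equation*}
The first summand equals $-\tfrac12\nabla^2\di_{z_t}^2(x)$ by a further application of \eqref{eq:nabla_d2} with $z_t$ fixed. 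The key point is that, in the normal coordinates centered at $x$, the map $y\mapsto\log_x(y)$ is by definition the inverse coordinate chart, so $d_y\log_x(y)|_{y=z_t}=\id$. Rearranging the resulting identity yields
\begin{equation*}
    d_xT_t \;=\; t\,\nabla^2\psi(x) + \tfrac12\,\nabla^2\di_{z_t}^2(x) \;=\; \nabla^2\bigl(\tfrac12\di_{z_t}^2 + t\psi\bigr)(x)\, ,
\end{equation*}
as desired.

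\textbf{Main obstacle.} The delicate point is justifying the chain-rule decomposition for $\log_{x'}(y)$, whose values live in the moving tangent space $T_{x'}M$. The use of normal coordinates centered at $x$ does double duty: it identifies nearby tangent spaces in a canonical way that trivializes $d_y\log_x$ to the identity, and it makes the covariant derivative of the left-hand side agree with the Euclidean Hessian of $\psi$. Both simplifications are essential for the clean matrix identity, and they are exactly what one loses outside the smooth framework.
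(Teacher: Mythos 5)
Your proof is correct and reaches the paper's conclusion by a dual route. The paper fixes $y := T(x)$, introduces $F(z,v)=\exp_z(v)$, $G(z)=-\tfrac12\nabla\di_y^2(z)$ and $w_s=\nabla(\tfrac12\di_y^2+\psi)(x_s)$, observes that $F(z,G(z))\equiv y$, and differentiates the \emph{exp}-identity $y_s=F(x_s, G(x_s)+w_s)$ at $s=0$; the constancy of $F(\cdot,G(\cdot))$ makes the $G$-term drop, and $\de_{G(x)}\exp_x=\id$ in normal coordinates gives $\de_xT(u)=\dot w_0=\nabla^2 h(x)u$. You instead differentiate the \emph{log}-identity $t\nabla\psi(x')=\log_{x'}(T_t(x'))$ directly in $x'$, using $\de_y\log_x=\id$ in normal coordinates, which is exactly the inverse of the same fact. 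Both routes rest on the same two observations — normal coordinates trivialize $\exp_x$ (hence $\log_x$), and the Christoffel symbols vanish at the origin, so Euclidean derivatives of vector fields agree with covariant ones there — and the chain-rule decomposition you flag in your ``main obstacle'' is precisely what the paper encodes via the auxiliary triple $(F,G,w)$. A small presentational advantage of your route is that it handles every $t\in[0,1]$ uniformly, avoiding the reduction to $t=1$ followed by rescaling. Two minor points: first, your version uses the log-identity at \emph{all} nearby $x'$, whereas the paper only needs part (1) at the single point $x$ (to get $w_0=0$); this is fine because the hypothesis that $T$ is optimal holds everywhere. Second, for part (1) the paper cites \cite[Lemma~3.3(b)]{Cordero--Erasquin-McCann-Schmuckenschlager:2001}, which works under the stated hypothesis that $T$ is optimal; you instead invoke the small $C^2$-norm regime of Section~\ref{sec:choice_kant}, which is valid for the lemma's application but silently strengthens its hypothesis — the lemma as written does not assume smallness of $\psi$.
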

The proof of this lemma closely follows the one of \cite[Proposition~4.1]{Cordero--Erasquin-McCann-Schmuckenschlager:2001}, for completeness we report here a concise proof of this fact.

\begin{proof} 
The proof of $(1)$ follows from \cite[Lemma~3.3(b)]{Cordero--Erasquin-McCann-Schmuckenschlager:2001}. We now prove $(2)$: without loss of generality, we can prove the claimed equality for $t=1$, the general case follows by suitable rescaling.
We fix $x \in M$ and set $y := T(x)=\exp_x(\nabla \psi(x))$. Define 
\begin{align}
    h(z):= \frac12 \di_y^2(z) + \psi(z)
        \, , \quad 
    \forall z \in M \, .
\end{align}
Let $u \in T_x M$ and set, for any $s\in [0,1]$, $x_s:= \exp_x(su)$, $y_s := T(x_s)=\exp_{x_s}(\nabla\psi(x_s))$, where in particular $x_0=x$, $y_0=y$ (see Figure~\ref{fig:perturb}).
\begin{figure}[ht]
    \centering
    \captionsetup{justification=centering}
        \includegraphics[scale=1]{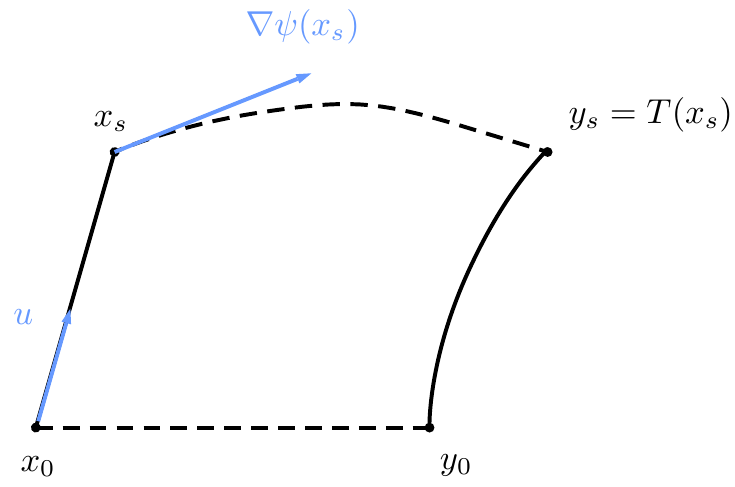}
    \caption{Picture of $s\mapsto x_s, y_s$ in normal coordinates centered at $x$.}
    \label{fig:perturb}
\end{figure} We also define
\begin{align}   \label{eq:defFG}
    F(z,v):= \exp_z(v)
        \, , \quad 
    G(z) := - \frac12 \nabla 
        \di_y^2 (z)\,,\qquad\forall\, v\in T_zM,\,z\in M
\end{align}
and note that by construction
\begin{align}
\label{eq:propFG}
    F(z,G(z)) = y
        \, ,\quad 
    \forall \,z \in M \, .
\end{align}
We also define $w_s:= \nabla h(x_s)$ and observe that $w_0=0$, using point $(1)$ of the statement, and by construction $y_s=F(x_s,\nabla\psi(x_s))=F(x_s, G(x_s) + w_s)$. Therefore, by computing the time derivative in $s$, at $s=0$, we find that
\begin{align}
    \dot y_s|_{s=0} &= \frac{\de}{\de s} \bigg|_{s=0}
    F(x_s, G(x_s) + w_s)=   
    \de_x F(\cdot,G(\cdot))\,\dot x_s|_{s=0} + \de_{G(x)}          F(x,\cdot)\,\dot w_s|_{s=0} \\
&=    
    \de_{G(x)} F(x, \cdot)\, \dot w_s|_{s=0} 
= 
    \big( \de_{G(x)} \exp_{x} \big)\, \dot w_s|_{s=0} \, ,
\end{align}
where we used that $w_0 = 0$ and  $\de_xF(\cdot, G(\cdot)) = 0$, which follows from \eqref{eq:propFG}. Working in normal coordinates centered at $x$, the exponential map (with base point $x$) is the identity, thus $\de_{G(x)} \exp_x = \id$. Consequently, we conclude that
 \begin{align}
     \de T_x(u) 
        =
    \frac{\de}{\de s} \bigg|_{s=0} T(x_s)
        =
    \dot y_s|_{s=0}  = \dot w_s|_{s=0}  
        = \nabla^2 h(x)(u) \, ,
 \end{align}
 for every $u \in T_{x}M$, which concludes the proof.
\end{proof}

We are ready to prove Proposition~\ref{prop:control}.
\begin{proof}[Proof of Proposition~\ref{prop:control}]
    We set $A_\eps:= Q_\eps(x_0) $ and $B_\eps:= T^\lambda(Q_\eps(x_0))$. 
    The idea is that, for if $T^\lambda$ were linear and \eqref{eq:hessian_squared_distance} were without the fourth--order error in the distance, we would be able to prove an exact set equality between $M_{\frac12} \big( A_\eps , B_\eps \big)$ and $T^\lambda_{\frac12} \big( A_\eps \big)$. Therefore, we linearize around $x_0$ and quantitatively study the error of this procedure.

    From now on, we assume to work in normal coordinates centered at $x_0$
, in a neighborhood $U \subset M$.  
With slight abuse of notation, we do not change names of quantities when written in coordinates.

Recall that $\log_x=\exp_x^{-1}$. We define the map $F(\cdot,\cdot)$ as
    \begin{align}
        F(x,y) := \exp_x 
        \left(
            \frac12 \log_x y
        \right)
            = \exp_x 
        \left(
            -\frac12 \nabla \left( \frac12 \di_y^2(\cdot) \right)(x)
        \right)
            \, , \quad 
        \text{for} \ 
            x, y \in M \,  ,
    \end{align}
    where the second equality follows from \eqref{eq:nabla_d2}, and observe that, by definition of midpoints,
    \begin{align}
        M_{\frac12} \big( A_\eps , B_\eps \big) 
            = F 
        \big(
            A_\eps \times B_\eps
        \big) \, .
    \end{align}
    
    \noindent
    \underline{Step 1: expansions around $x_0$}. \
    A simple computation shows that, in normal coordinates centered at $x_0$, the differential of $F$ with respect to the second variable reads
    \begin{align}   \label{eq:expansion_Fy}
        \de_y F(x_0,\cdot) = \frac12 \id
            \, , \quad \forall y \in U \, .
    \end{align}
    To compute the differential with respect to the first variable, we set $\tilde \psi:= -\frac12 \di_{y_0}^2$ and note that $F(x,y_0) = \tilde T_{\frac12}(x) := \exp_x \Big( \nabla \tilde \psi(x) /2 \Big)$. Therefore, thanks to Lemma~\ref{lem:gradients}, we obtain that
    \begin{align}   \label{eq:expansion_Fx}
        \de_{x_0} F(\cdot,y_0) 
        = \nabla^2 
        \left(
            \frac12 \di_{\tilde T_{\frac12}(x_0)}^2 + \frac12 \tilde\psi \right) (x_0)
        = \nabla^2 
        \left(
            \frac12 \di_{z_0}^2 - \frac14 \di_{y_0}^2    
        \right) (x_0)
        \, ,
    \end{align}
where $\tilde T_{\frac12}(x_0) = T^\lambda_{\frac12}(x_0) =: z_0$. Note that, by $\log_{x_0}(z_0) = \frac12 \log_{x
_0}(y_0)$ and and the homogeneity of the Riemann tensor, we get that $R_{z_0}(x_0) = \frac14 R_{y_0}(x_0)$. 
From the expansion \eqref{eq:hessian_squared_distance}, we then find that
\begin{align}
    \de_{x_0} F(\cdot,y_0) 
        &= \id -\frac13 R_{z_0}(x_0) - \frac12
            \left(
                \id - \frac13 R_{y_0}(x_0)
            \right)
        + O(\lambda^4) 
\\        
        &=\frac12 
        \left(
            \id + \frac16 R_{y_0}(x_0)
        \right)
        + O(\lambda^4) \, .
\end{align}
Similarly, by $R_{T^\lambda_t(x_0)}(x_0) = t^2 R_{y_0}(x_0)$, and applying Lemma \ref{lem:gradients}, we find that
\begin{align}   \label{eq:expansion_Tt}
    \de_{x_0} T^\lambda_t = \id - \frac{t^2}3 R_{y_0}(x_0) + t \nabla^2 \psi_\lambda(x_0) + O(\lambda^4)
        \, , \quad  \forall t \in [0,1] \, ,
\end{align}
as $\lambda\to 0$.
We compute the Taylor expansion of the map 
\begin{equation}
    A_\eps \times A_\eps \ni ( x,x' ) \mapsto F ( x,T^\lambda( x' ) )
\end{equation}
at the point $( x_0,x_0 )$. 
Using the differentiability of $F$ in $(x,y)= (x_0,y_0)$, from \eqref{eq:expansion_Fy}, \eqref{eq:expansion_Fx}, and \eqref{eq:expansion_Tt} with $t=1$, for $x, x' \in A_\eps$ -- recall that $\diam \big( A_\eps \big) = O(\eps)$ -- in coordinates, as $\eps\to 0$, we obtain 
\begin{align*}
    F(x,T^\lambda(x')) 
        &= F(x_0,y_0) + \de_{x_0} F(\cdot,y_0) (x-x_0) + \de_{y_0} F(x_0,\cdot) (T^\lambda(x')-y_0) + O(\eps^2)
    \\
        &= z_0 + \frac12 
                 (T^\lambda(x')-y_0)
            + \frac12
        \left(
            \id   + \frac1{6} R_{y_0}(x_0) 
        \right) (x-x_0)
      + O\big( \eps^2 + \eps \lambda^4  \big)
    \\  \label{eq:Taylor_F}
        &= z_0 + \frac12
            \left(
                \id - \frac13 R_{y_0}(x_0) + \nabla^2 \psi_\lambda(0)
            \right) (x'-x_0) 
         \\
            &\qquad\qquad\qquad\qquad\quad\,+\frac12 
         \left(
            \id + \frac1{6} R_{y_0}(x_0) 
        \right) (x-x_0)
            +  O\big( \eps^2 + \eps \lambda^4  \big).
\end{align*}
We remark that, to deduce the error terms in the first and third equality, we have performed a Taylor expansion of $F$ and exploited the fact that 
\begin{equation}
\label{eq:C2_norm_Tlambda}
    \|T^\lambda\|_{C^2(U)}\leq L \left( 1+\|T^1\|_{C^2(U)} \right),
\end{equation} 
with $L>0$ independent on $\lambda$. To prove \eqref{eq:C2_norm_Tlambda}, it is enough to use Lemma \ref{lem:gradients}, together with the expansion \eqref{eq:hessian_squared_distance}.
Similarly, using \eqref{eq:expansion_Tt} with $t=\frac12$, we have that, for any $x'' \in A_\eps$,
\begin{align}
    T_{\frac12}^\lambda(x'') 
        &= z_0 + \de_{x_0} T_{\frac12}^\lambda ( x''-x_0 ) + O(\eps^2)
    \\  \label{eq:Taylor_T12}
        & = z_0 + 
        \left(
            \id - \frac1{12}R_{y_0}(x_0) + \frac12 \nabla^2 \psi_\lambda(x_0)
        \right) ( x'' - x_0 )
            +  O\big( \eps^2 + \eps \lambda^4  \big) \, .
\end{align}
Taking into account condition \eqref{eq:conditions_psi}, we can then write 
\begin{align}   
\begin{aligned}
\label{eq:linearization_M123}
    F(x,T^\lambda(x'))  
        &= z_0 + \frac12 
        \Big(
            M_1 (x-x_0) + M_2 (x'-x_0)
        \Big) 
             +  O\big( \eps^2+ \eps \lambda^4  \big) \, ,
    \\ T_{\frac12}^\lambda(x'') 
        &= z_0 + M_3 (x''-x_0)   
            +  O\big( \eps^2 + \eps \lambda^4  \big) \, ,
\end{aligned}
\end{align}
for $x$, $x'$, $x'' \in A_\eps$, where the linear operators $M_i$ are given by
\begin{gather}
\label{eq:defM1M2}
    M_1 := \id + \frac16 R_{y_0}(x_0)
        \, , \quad 
    M_2:= (1+\lambda\alpha_0) \id - \frac13 R_{y_0}(x_0)
        \, , 
\\
    \label{eq:defM3}
    M_3 := \bigg( 1+\lambda\frac{\alpha_0}2 \bigg) \id - \frac1{12} R_{y_0}(x_0)
        = \frac12 (M_1 + M_2)
        \, .
\end{gather}

\noindent 
\underline{Step 2: solution to the linear problem}. \ 
We set $T_{\frac12, \text{lin}}^\lambda(x) := z_0 + M_3 (x-x_0)$. We claim 
\begin{align}   \label{eq:dist_M12_linT}
    M_{\frac12}
        \left(
            A_\eps , B_\eps
        \right)
\subset 
    \tB_{\rho_{\eps,\lambda}} 
    \Big(
        T_{\frac12, \text{lin}}^\lambda(A_\eps)
    \Big)
        \, , \quad 
    \rho_{\eps,\lambda} := C \big( \eps^2 + \eps \lambda^4 \big)
        \, , \quad C \in \R_+ \, ,
\end{align}
where $\tB_\rho(D)$ denotes the Euclidean $\rho$-enlargement of a set $D$, in coordinates.
It suffices to prove that, for every $x$, $x' \in A_\eps$, we can solve the problem
\begin{align}   \label{eq:linear_problem}
    \text{find} 
        \ \,
            x'' \in A_\eps \ \, \text{such that} 
        \quad  
    M_3 x'' = \frac12 
        \left(
            M_1 x + M_2 x'
        \right) 
    \, .
\end{align}
Indeed, fix $z\in M_{\frac12}(A_\eps,B_\eps)$, then by definition $z=F(x,T^\lambda(x'))$, for some $x,x'\in A_\eps$. Let $x''\in A_\eps$ solving \eqref{eq:linear_problem}, then, thanks to \eqref{eq:linearization_M123},
\begin{equation}
    z = F(x,T^\lambda(x')) = T_{\frac12, \text{lin}}^\lambda(x'') + O\big( \eps^2 + \eps \lambda^4 \big) \, ,
\end{equation}
thus proving \eqref{eq:dist_M12_linT}. In order to prove claim \eqref{eq:linear_problem}, note there exists $\lambda_2$ sufficiently small such that, for $\lambda \leq  \lambda_2$, the matrices $M_i$ are positive definite, since $\| R_{y_0}(x_0) \|_{\text{HS}} \leq C \lambda^2$.
In particular, the matrix $M_3$ is invertible. It follows that the problem \eqref{eq:linear_problem} is solved as soon as we can ensure that
\begin{align}   \label{eq:final_proof}
    \forall x, \, x' \in A_\eps 
        \, , \quad 
    M_3^{-1}
    \left( 
        \frac12 
            \left(
                M_1 x + M_2 x'
            \right) 
    \right) \in A_\eps
    \, .
\end{align}
This is a consequence of the fact that $A_\eps$ is a cube of eigenvectors of $R_{y_0}(x_0)$. Indeed, let $\{\mu_i^1 \}_i$, $\{\mu_i^2 \}_i$, $\{\mu_i^3 \}_i$ be the corresponding (positive) eigenvalues associated with $\{e_i\}_i$ of the matrices $M_1$, $M_2$, $M_3$, respectively (note that they share the same eigenspaces). By definition of normal coordinates and $A_\eps$, every $x$, $x' \in A_\eps$ can be written as
\begin{align}
    x = x_0 + \sum_{i=1}^n x_i e_i
        \, , \quad 
    x' = x_0 + \sum_{i=1}^n x_i' e_i
        \, , \quad 
    x_j \, , \, x_j'  \in [0,\eps]
        \, , \quad 
    \forall j = 1 , \dots , n \, .
\end{align}
Therefore, we have that
\begin{align*}
   M_3^{-1}
    \bigg( 
        \frac12 
            \left(
                M_1 x+ M_2 x'
            \right) 
    \bigg) - x_0
        &=
        M_3^{-1}
    \bigg(
        \frac12 \sum_{i=1}^n
            \big( 
                \mu_i^1 x_i + \mu_i^2 x_i'
            \big) e_i
    \bigg) 
        =
    \sum_{i=1}^n    
    \bigg(
        \frac{\mu_i^1 x_i + \mu_i^2 x_i'}{2\mu_i^3}  
    \bigg) e_i\, .
\end{align*}
Recall that $M_3 = (M_1 + M_2)/2$, which in particular implies that, for every $i=1, \dots, n$, 
\begin{align}
    \frac{\mu_i^1 + \mu_i^2}{2\mu_i^3} = 1 
        \quad \Longrightarrow \quad 
        \frac{\mu_i^1  x_i + \mu_i^2 x_i'}{2\mu_i^3} 
    \in [0, \eps] \, .
\end{align}
This concludes the proof of \eqref{eq:final_proof}, and hence of \eqref{eq:linear_problem}.

\smallskip
\noindent
\underline{Step 3: measure comparison}. \ 
In the remaining part of the proof, the constant $C \in \R_+$ does not depend on $\lambda$ and $\eps$, and might change line by line. Taking advantage of \eqref{eq:expansion_Tt} once again, recalling the definition of $M_3$ in \eqref{eq:defM3}, we infer 
\begin{align}   \label{eq:expansion_gradT_t}
    \de_x T_{\frac12}^\lambda = M_3 + O \big( \lambda^4 + \eps \big) 
        \, , \quad \forall x \in A_\eps \, .
\end{align}
By the Jacobi's formula for the derivative of the determinant, we deduce that
\begin{align}
\label{eq:expansion_det_dT_t}
    \det \left( \de_x T_{\frac12}^\lambda \right) = \det M_3 
    \left(
        1 + O \big( \lambda^4 + \eps \big)     
    \right) \, , \quad \forall x \in A_\eps \, .
\end{align}
Let $f\in C^2$ be the density of $\wei$ in coordinates and set $f^\lambda := f \circ T_{\frac12}^\lambda$, $f_{\lin}^\lambda := f \circ T_{\frac12, \text{lin}}^\lambda$. We expand $f$ at $T_{\frac12,\text{lin}}^\lambda(x)$ for $x\in A_\eps$, and evaluate the expansion at $T_{\frac12}^\lambda(x)$, obtaining
\begin{equation}
\label{eq:expansion_flambda}
    f^\lambda(x) = f_\lin^\lambda(x) \Big( 1 + O \Big( \big\| T_{\frac12}^\lambda - T_{\frac12,\text{lin}}^\lambda \big\|_{L^\infty(A_\eps)} \Big) \Big) = f_\lin^\lambda(x)\Big( 1 + O \big( \eps^2+\eps \lambda^4 \big) \Big) \, , 
\end{equation}
where we used \eqref{eq:linearization_M123} and the fact that $f$ is of the form $e^{-\tilde V}$, for some $\tilde V \in C^2$ depending on $V$ and the metric $\g$. Thus, by \eqref{eq:expansion_det_dT_t} and \eqref{eq:expansion_flambda}, we have that
\begin{align}
\label{eq:T--Tlin_measure_comp}
\begin{aligned}
    \wei
    \left(
        T_{\frac12}^\lambda(A_\eps)
    \right)
        &=
    \int_{A_\eps} 
    \left|
        \det \Big( \de T_{\frac12}^\lambda \Big)
    \right| f^\lambda(x)\,\de x 
\\
        &=\int_{A_\eps} \left| \det M_3 \right|
        \left(
            1+ O\big( \lambda^4 + \eps \big) 
        \right)
         f_\lin^\lambda(x) \Big( 1 + O\big( \eps^2 + \eps \lambda^4 \big) \Big)\de x
\\
        &= \left(
            1+ O\big( \lambda^4 + \eps \big) 
        \right) \wei \left( T_{\frac12,\text{lin}}^\lambda(A_\eps) \right) \, .
\end{aligned}
\end{align}
Moreover, by means of basic properties of the linear maps, it is easy to see that
\begin{align}   
\label{eq:ball_inclusion}
    \tB_{\rho_{\eps,\lambda}} 
    \Big(
        T_{\frac12, \text{lin}}^\lambda(A_\eps)
    \Big)
        \subset 
    T_{\frac12, \text{lin}}^\lambda
        \left(
            \tB_{c \rho_{\eps,\lambda}}(A_\eps)
        \right)
        \, , \quad  c \in \R_+ \, ,
\end{align}
where $\rho_{\eps,\lambda}$ is defined in \eqref{eq:dist_M12_linT} and $c$ is an upper bound for $\| M_3 \|_{\text{op}}$ uniform in $\lambda$.
The next step consists of studying the ratio between the measure of the image, via the affine map $T_{\frac12, \text{lin}}^\lambda$, of the set $A_\eps$ and its enlargement. 
Denote by $\wei_\lin \in \cM_+(U)$ the measure with density $f_\lin^\lambda$ in coordinates.
Arguing as in \eqref{eq:T--Tlin_measure_comp}, we find that
\begin{align}   
    \frac
    {
    \wei
    \left(
    T_{\frac12, \text{lin}}^\lambda
        \left(
            \tB_{c \rho_{\eps,\lambda}}(A_\eps)
        \right)
    \right)
    }
    {
        \wei
        \left(
            T_{\frac12, \text{lin}}^\lambda(A_\eps)
        \right)
    }
        &=
    \frac
    {
    \wei_\lin
    \left(
        \tB_{c\rho_{\eps,\lambda}}     
            (A_\eps) 
    \right)
    }
    {
    \wei_\lin(A_\eps)
    }
      =
    \frac
    {
    \Leb^n
    \left(
         \tB_{c\rho_{\eps,\lambda}} (A_\eps) 
    \right)
    }
    {
        \Leb^n \big( A_\eps \big)
    }
    \frac
    { 
    \fint_{ \tB_{c\rho_{\eps,\lambda}}(A_\eps) }
        f_\lin^\lambda(x) \de x
    }
    {
        \fint_{A_\eps} 
        f_\lin^\lambda(x) \de x
    } \, .
\end{align}
On the one hand, an application of the Minkowski--Steiner formula for convex bodies, together with $\Haus^{n-1}(A_\eps) = C \eps^{-1} \Leb^n(A_\eps)$, yields
\begin{align}
    \Leb^n
    \left(
         \tB_{c\rho_{\eps,\lambda}}(A_\eps) 
    \right)
\leq 
     \Big(
        1+ C \eps^{-1}\rho_{\eps,\lambda}
    \Big)
    \Leb^n \big(  A_\eps \big)
= 
    \Big(
        1+ C (\lambda^4 + \eps)
    \Big)
    \Leb^n \big(  A_\eps \big) 
    \, .
\end{align}
On the other hand, reasoning as in \eqref{eq:expansion_flambda}, we perform a Taylor expansion of $f_\lin^\lambda$ at $x_0$, obtaining the following two-sided bound:
\begin{align}
    \Big(
        1- C (\eps + \rho_{\eps,\lambda} )
    \Big)
        \leq 
    \frac{f_\lin^\lambda(x)}{f_\lin^\lambda(x_0)}
        \leq 
    \Big(
        1+ C (\eps + \rho_{\eps,\lambda} )
    \Big)
        \, , \quad \forall x \in \tB_{c\rho_{\eps,\lambda}}(A_\eps) \, .
\end{align}
Therefore, using that $\eps + \rho_{\eps,\lambda} \leq 2 \eps + \lambda^4$, we obtain the upper bound
\begin{align}   \label{eq:estimate_measure_linear_map}
    \wei
    \left(
    T_{\frac12, \text{lin}}^\lambda
        \left(
            \tB_{c \rho_{\eps,\lambda}}(A_\eps)
        \right)
    \right)
\leq 
    \Big(
        1+ C (\lambda^4 + \eps )
    \Big)
    \wei
        \left(
            T_{\frac12, \text{lin}}^\lambda(A_\eps)
        \right) \, .
\end{align}
In conclusion, putting together \eqref{eq:dist_M12_linT},  \eqref{eq:T--Tlin_measure_comp}, \eqref{eq:ball_inclusion}, and \eqref{eq:estimate_measure_linear_map}, we get that
\begin{align}
    \wei 
    \left(
        M_{\frac12}(A_\eps,B_\eps)
    \right)
        &\stackrel{\eqref{eq:dist_M12_linT}}\leq
    \wei
    \left(
        \tB_{\rho_{\eps,\lambda}} \Big( 
            T_{\frac12, \text{lin}}^\lambda(A_\eps) 
        \Big)
    \right)
        \stackrel{\eqref{eq:ball_inclusion}}{\leq} 
    \wei 
    \left(
        T_{\frac12, \text{lin}}^\lambda
        \left(
            \tB_{c \rho_{\eps,\lambda}}(A_\eps)
        \right)
    \right)
\\
    &\stackrel{\eqref{eq:estimate_measure_linear_map}}{\leq}
    \big( 1 + C \big( \lambda^4+ \eps 
    \big) \big)
        \wei 
        \left(
            T_{\frac12, \text{lin}}^\lambda(A_\eps)
        \right)
\\
        &\stackrel{\eqref{eq:T--Tlin_measure_comp}}{\leq}
     \big( 1 + C \big( \lambda^4+ \eps 
    \big) \big)
        \wei \left( T_{\frac12}^\lambda(A_\eps) \right) \, .
\end{align}
We take the power $\frac1N$ on both side: up to changing the constant $C$ once again and  considering $\lambda$ and $\eps$ sufficiently small, we conclude the proof.
\end{proof}

\subsection{Proof of the main result}
We are ready to prove our main result, Theorem~\ref{thm:main}.
\begin{proof}[Proof of Theorem~\ref{thm:main}]
By contradiction, we assume there exists $x_0 \in M$, $v_0 \in T_{x_0}M$ such that \eqref{eq:ric_absurd} is satisfied, for some $\delta>0$. Consider $\lambda$ sufficiently small, and let $T^\lambda$ be as in \eqref{eq:transport_map} and $y_0 := T^\lambda(x_0)$. In addition, let $\{e_1, \dots, e_n \}$ be an orthonormal basis of eigenvectors of $R_{y_0}(x_0)$ and, for $\eps \in (0,\bar\eps)$, let $Q_\eps(x_0)$ be the corresponding cube, as described in Section~\ref{sec:infinitesimal}.
Thanks to Proposition~\ref{prop:upper_bound_hatD} and Proposition~\ref{prop:control}, choosing $\lambda\leq\min\{\lambda_1, \lambda_2\}$, we know 
\begin{gather}
\label{eq:final_D}
    \cD_{x_0,\lambda}\bigg( \frac12 \bigg) 
        \leq \tau_{K,N}^{\left( \frac12 \right)}(\lambda)
            \left(
                \cD_{x_0,\lambda}(0) +  \cD_{x_0,\lambda}(1)  
            \right)    
             - c \lambda^2 \, ,\\
\label{eq:final_MT}
\wei
    \left(  
        M_{\frac12} \big( Q_\eps(x_0) , T^\lambda(Q_\eps(x_0)) \big) 
    \right)^\frac1N
        \leq 
    \big( 1 + C \big( \lambda^4 + \eps \big) \big)
        \wei 
        \left( 
            T_{\frac12}^\lambda \big( Q_\eps(x_0) \big) 
        \right)^\frac1N \, , 
\end{gather}
where $\cD_{x_0,\lambda}$ is the volume distortion function associated with $\gamma^\lambda$, i.e. 
\begin{align}
    \cD_{x_0,\lambda}(t):=
\lim_{\eps\to 0}
    \frac{\wei(T_t^\lambda(Q_\eps(x_0)))^{\frac1N}}{\wei(Q_\eps(x_0))^{\frac1N}}
        \, , \quad t \in [0,1] \, .
\end{align}

We select the marginal sets $A_\eps:= Q_\eps(x_0)$, $B_\eps:=T^\lambda (Q_\eps(x_0)) $. Recalling the definition of $\Theta(A,B)$ in \eqref{eq:def_Theta}, we have that $\Theta(A_\eps,B_\eps)=\lambda+O(\eps)$, as $ \eps\to 0$, hence we deduce that
\begin{equation}
\label{eq:expansion_tau_Theta}
   \tau_{K,N}^{(\frac12)}(\lambda)= \tau_{K,N}^{(\frac12)}\left(\Theta(A_\eps,B_\eps)\right)+O(\eps)\,,\qquad \text{as} \ \eps\to 0\,.
\end{equation}
Combining \eqref{eq:expansion_tau_Theta} with \eqref{eq:final_D}, we infer that there exists $\eps_0=\eps_0(\lambda)$ and $\bar c>0$, such that, for every $\eps \leq \eps_0$, we have the inequality 
\begin{equation}
\label{eq:LPORTSK}
    \wei\big(T_{\frac12}^\lambda(A_\eps)\big)^{\frac1N}
\leq 
    \tau_{K,N}^{\left( \frac12 \right)}(\Theta(A_\eps,B_\eps))
            \left(
                \wei(A_\eps)^{\frac1N} +  \wei(B_\eps )^{\frac1N}  
            \right)    
            +\left(-\frac{c}{2} \lambda^2    
               +\bar c \eps \right)\wei(A_\eps)^{\frac1N}  \, ,
\end{equation}
where we have also used \eqref{eq:C2_norm_Tlambda} to estimate $\wei(B_\eps)$ in terms of $\wei(A_\eps)$. As a consequence, if $\eps \leq \min\{\eps_0, \lambda^4\}$, from \eqref{eq:final_MT} and \eqref{eq:LPORTSK}, we obtain that
\begin{align*}
    \wei
    \left(  
        M_{\frac12} \big( A_\eps , B_\eps \big) 
    \right)^{\frac1N}
&\leq 
    \tau_{K,N}^{\left( \frac12 \right)}\left(\Theta(A_\eps,B_\eps)\right)
        \left(
            \wei(A_\eps)^{\frac1N} +  \wei(B_\eps )^{\frac1N}  
        \right)\\
    &\qquad\qquad\qquad\,+\left(- \frac c2 \lambda^2+\bar c \lambda^4 \right) \wei(A_\eps)^{\frac1N}
            + 2C \lambda^4 
             \wei\big(T_{\frac12}^\lambda(A_\eps)\big)^{\frac1N}\\
&\leq 
    \tau_{K,N}^{\left( \frac12 \right)}\left(\Theta(A_\eps,B_\eps)\right)
        \left(
            \wei(A_\eps)^{\frac1N} +  \wei(B_\eps )^{\frac1N}  
        \right) \\
        &\qquad\qquad\qquad\,+ 
        \left(
            -\frac c2 \lambda^2 + 2RC \lambda^4+\bar c \lambda^4
        \right) \wei(A_\eps)^{\frac1N}
            \, ,
\end{align*}
where we have used that $\wei\big(T_{\frac12}^\lambda(A_\eps)\big) \leq R \wei(A_\eps)$, for some $R>0$, which is a consequence once again of \eqref{eq:C2_norm_Tlambda}. Therefore, if $\lambda$ was chosen in such a way that $\lambda^2<c(4RC+2\bar c)^{-1}$, then for any $\eps \leq \min\{\eps_0, \lambda^4\}$, we finally conclude that
\begin{align}
\label{eq:final_ineq}
      \wei
    \left(  
        M_{\frac12} \big( A_\eps , B_\eps \big) 
    \right) ^{\frac1N}
<
    \tau_{K,N}^{\left( \frac12 \right)}\left(\Theta(A_\eps,B_\eps)\right)
        \left(
            \wei(A_\eps)^{\frac1N} +  \wei(B_\eps )^{\frac1N}  
        \right) \, ,
\end{align}
which is a contradiction to the space satisfying $\bm(K,N)$. This concludes the proof.
\end{proof}

\begin{rmk}
The estimate of Proposition \ref{prop:upper_bound_hatD} gives \emph{at best} a negative error in $\lambda$ of order $2$. Therefore, in Proposition \ref{prop:control} an estimate with a second--order precision would not be enough as the two errors would compete without having a definitive sign in the limit. Thus, we need to push the estimate of Proposition \ref{prop:control} to a fourth--order precision, in order to conclude that the error term is \emph{definitively negative}. Remarkably the estimate of Proposition \ref{prop:control} does not involve a term of order $2$ in $\lambda$ and we were finally able to prove \eqref{eq:final_ineq} carefully choosing $\lambda$.
\end{rmk}

\bibliographystyle{alphaabbr}

\end{document}